\pgfplotsset{compat=1.8}
\numberwithin{equation}{section}
\newtheorem{lem}{Lemma}[section]
\newtheorem{prop}[lem]{Proposition}
\newtheorem{defi}[lem]{Definition}
\newtheorem{rmk}[lem]{Remark}
\newtheorem{thm}[lem]{Theorem}
\newtheorem{cor}[lem]{Corollary}
\begin{document}

\title{\bf Mean Curvature Flow for Isoparametric Submanifolds in  Hyperbolic Spaces}
\author{Xiaobo Liu \thanks{Research was partially supported by NSFC grants 12341105 and 12426402.}, \,\,\,  Wanxu Yang }
\date{}

\maketitle

\begin{abstract}
   	Mean curvature flows of isoparametric submanifolds in Euclidean spaces and spheres have been studied by Liu and Terng in \cite{X.CT} and \cite{X.C}. In particular, it was proved that such flows always have ancient solutions. This is also true for mean curvature flows of isoparametric hypersurfaces in hyperbolic spaces by a result of Reis and Tenenblat in \cite{S.H.T}. In this paper, we study mean curvature flows of isoparametric submanifolds in  hyperbolic spaces with arbitrary codimension.  In particular, we will show that they always have ancient solutions and study their limiting behaviors.
\end{abstract}

\section{Introduction}

The {\it mean curvature flow} (abbreviated as MCF) of a Riemannian submanifold $M$ in a semi-Riemannian manifold X is a map
$f: M\times I \rightarrow X$ satisfying
  \begin{align}
  	\frac{\partial f}{\partial t}= H(\cdot,t), \label{1.16}
  \end{align}
where $I$ is an interval containing $0$, $f(\cdot, 0)$ is the immersion of $M$ in $X$, and $H(\cdot,t)$ is the mean curvature vector field of $f(\cdot,t)$ for all $t \in I$.
A solution to this equation is {\it ancient} if it exists over an interval $I = (-\infty, T)$ for some $T > 0$.
Ancient solutions play an important role in the study of singularities of MCF. A large class of ancient solutions
are given by MCF of isoparametric submanifolds.

A submanifold in a space form is {\it isoparametric} if its normal bundle is flat and principal curvatures along any parallel normal vector field are constant (cf. \cite{T.C} and \cite{B.W}).
In \cite{X.CT} and \cite{X.C}, Terng and the first author of this paper studied MCF
of isoparametric submanifolds $M$ in Euclidean spaces and spheres.
In particular, a recursive method to construct explicit solutions of MCF for such submanifolds were given in \cite{X.CT}.
It was also proved that MCF of $M$ always converges to a focal submanifold in a finite positive time if $M$ is not minimal.
In \cite{X.C}, it was proved  that MCF of all $M$ have {\it ancient solutions}. This gives many explicit examples with complex topological types for ancient solutions of MCF in Euclidean spaces and spheres. These examples were used to study sharpness of conditions in  Huisken and Sinestrari's rigidity theorem about ancient solutions for MCF of hypersurfaces in spheres modeled on shrinking
spherical caps (cf. \cite{HS}). Rigidity conjectures modeled on MCF of general isoparametric hypersurfaces in spheres
were also proposed in \cite{X.C}.
On the other hand, in \cite{S.H.T}, Reis and Tenenblat gave explicit ancient solutions for MCF of isoparametric
 hypersurfaces in hyperbolic spaces $\mathbb{H}^{m}(-1)$ with sectional curvature equal to $-1$.

 A notion of isoparametric submanifolds in general Riemannian manifolds was introduced in \cite{HLO}.
Equifocal submanifolds in compact symmetric spaces defined by Terng and Thorbergsson in \cite{TTh}
is equivalent to isoparametric submanifolds with flat sections in \cite{HLO}.
Results analogous to those in \cite{X.CT} were obtained for MCF of equifocal submanifolds in compact symmetric spaces by Koike in \cite{K.N}, and for MCF of regular leaves in generalized isoparametric foliations with compact leaves on compact Riemannian manifolds by Alexandrino and Radeschi in \cite{A.R}. The compactness condition for leaves and ambient spaces in \cite{A.R} was replaced by a weaker conditon for the compactness of leaf spaces by Alexandrino, Cavenaghi and Gon\c{c}alves in \cite{A.C.G}.
Results analogous to those in \cite{X.C} were obtained for MCF of regular leaves of polar foliations (they are in fact isoparametric) in simply connected symmetric spaces with non-negative curvature by Radeschi and the first author of this paper in \cite{L.R}.
In \cite{K.N1}, Koike also studied MCF for curvature-adapted isoparametric submanifolds with flat sections in non-compact
symmetric spaces. Note that isoparametric  submanifolds with flat sections in hyperbolic spaces must be hypersurfaces.
Above results do not cover MCF for isoparametric submanifolds in hyperbolic spaces with codimension bigger than one.

The purpose of this paper is to study MCF for isoparametric submanifolds in hyperbolic spaces with arbitrary codimension.
One of the  main results of this paper is the following

\begin{thm} \label{1.17}
  	Let $M$ be an $n$ dimensional complete  isoparametric submanifold in $\mathbb{H}^{m}(-1)$, and $f(\cdot,t)$ the  MCF of $M$ in $\mathbb{H}^{m}(-1)$. Then $f(\cdot,t)$ is always an ancient solution. Moreover, let $(-\infty,T)$ be the maximum existence interval for $f(\cdot,t)$ with
   $T \geq 0$. Then we have the following:
\begin{itemize}
\item[(1)] If $T< \infty$, then $f(\cdot,t)$ collapses to a focal submanifold of $M$  as $t\rightarrow T$.

\item[(2)] If $T= \infty$ and $M$ is not flat, then $f(\cdot,t)$ converges to a totally geodesic  submanifold of $\mathbb H^{m}(-1)$ as $t\rightarrow \infty$.

\item[(3)]
  If $T= \infty$ and $M$ is flat with dimension greater than $1$, then $f(\cdot,t)$ converges to a point on the ideal boundary of $\mathbb{H}^{m}(-1)$ as $t\rightarrow \infty$.

\item[(4)] Assume $M$ is not totally geodesic in $\mathbb{H}^m(-1)$ (otherwise $f(\cdot,t)$  is independent of $t$). As $t\rightarrow-\infty$, $f(\cdot,t)$ converges to a smooth submanifold  $\widetilde M$ with flat normal bundle in the ideal boundary of $\mathbb{H}^{m}(-1)$. Moreover, $\widetilde M$ and $M$ have the same dimension.
\end{itemize}
\end{thm}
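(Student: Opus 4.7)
The plan is to follow the paradigm of Liu--Terng \cite{X.CT,X.C} and Reis--Tenenblat \cite{S.H.T}: use the isoparametric structure to reduce the MCF of the parallel family to a finite-dimensional ODE on a single normal space, and then analyze its orbit structure in both time directions using the hyperboloid model $\mathbb{H}^m(-1) \subset \mathbb{R}^{m,1}$. Fix $p_0\in M$ and let $V=\nu_{p_0}M$. Flatness of the normal bundle and parallel--constant principal curvatures extend every $v\in V$ to a parallel normal field $\tilde v$ on $M$, so that $f_v(p)=\exp_p^{\mathbb{H}}(\tilde v(p))$ is an immersion onto an isoparametric submanifold $M_v$ off the focal set $\Sigma\subset V$. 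The mean curvature vector of $M_v$ is itself parallel, so parallel transport identifies it with a vector $H(v)\in V$, giving a real-analytic vector field $H\colon V\setminus\Sigma\to V$; since the parallel family is invariant under MCF, equation \eqref{1.16} reduces to $v'(t)=H(v(t))$ with $v(0)=0$. In the hyperboloid model one has the closed form
\[
f_v(p)=\cosh|v|\,F(p)+\tfrac{\sinh|v|}{|v|}\,\tilde v(p),
\]
which makes the shape operators of $M_v$, the vector field $H$, and the focal set $\Sigma$ (a union of hypersurfaces cut out by hyperbolic-algebraic equations, analogous to the affine Weyl chamber walls of the Euclidean theory) all explicit.

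\textbf{Forward-time analysis (parts (1)--(3)).} The volume $\mathrm{Vol}(M_{v(t)})$ is a strict Lyapunov function for the ODE. If $T<\infty$, an argument analogous to \cite{X.CT} shows $v(t)$ approaches a wall of $\Sigma$ and $M_{v(t)}$ collapses onto the corresponding focal submanifold, giving (1). If $T=\infty$, either $v(t)$ accumulates at an interior critical point $v_\infty$ of $H$ --- in which case $M_{v_\infty}$ is a minimal isoparametric submanifold, which by the structure theory of isoparametric submanifolds in $\mathbb{H}^m(-1)$ must be totally geodesic when $M$ is not flat, giving (2) --- or no such critical point exists, forcing $|v(t)|\to\infty$. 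When $M$ is flat with $\dim M\geq 2$, the latter case applies, and the limit direction $\xi_\infty=\lim v(t)/|v(t)|$ is one along which $F(p)+\tilde\xi_\infty(p)$ is independent of $p\in M$, yielding a single ideal boundary point and establishing (3).

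\textbf{Backward-time analysis (part (4)).} The Lyapunov function precludes backward collision with $\Sigma$, and the explicit formula shows $|H(v)|$ stays bounded as $|v|\to\infty$, so the ODE is backward complete with $|v(t)|\to\infty$. A detailed orbit analysis then shows $v(t)/|v(t)|\to\xi_\infty$ for a unit vector $\xi_\infty$ in the interior of a single Weyl chamber. Inserting into the closed form yields
\[
e^{-|v(t)|}f_{v(t)}(p)\;\longrightarrow\;\tfrac{1}{2}\bigl(F(p)+\tilde\xi_\infty(p)\bigr),
\]
which is a map of $M$ into the future null cone of $\mathbb{R}^{m,1}$; its projectivization to $\partial_\infty\mathbb{H}^m(-1)$ defines the limit $\widetilde M$. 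Because $\xi_\infty$ lies in the open chamber, $I-A_{\tilde\xi_\infty}$ is invertible, so the differential of the limit map has rank $n=\dim M$, giving smoothness with $\dim\widetilde M=\dim M$; flatness of $\nu\widetilde M$ is then inherited from the parallel structure of $\nu M$.

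\textbf{Main obstacle.} The two hardest points both live in the backward analysis: (i) proving that $v(t)/|v(t)|$ actually converges as $t\to-\infty$ (rather than merely having accumulation directions), which requires a global analysis of the hyperbolic-trigonometric flow of $H$ on $V$ via the Coxeter-type symmetry of $M$; and (ii) upgrading the pointwise convergence above to smooth convergence of immersions in order to justify the regularity and normal-bundle claims for $\widetilde M$.
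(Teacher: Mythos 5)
Your proposal follows a genuinely different route from the paper: you try to run the Liu--Terng program directly in $\mathbb{H}^m(-1)$, reducing the flow to an ODE $v'=H(v)$ on a fixed normal space and analyzing its orbits. The paper instead never performs this orbit analysis. It uses Wu's decomposition theorem (Theorem \ref{thm:Decomp}): a full isoparametric $M$ splits as $\mathbb{H}^l(-r)\times M'$ with $M'$ isoparametric in a sphere, so the Lorentzian MCF splits as a product whose factors are already understood (Corollary \ref{2.6} and the results of \cite{X.CT,X.C}); a non-full $M$ lies in a totally umbilical hypersurface $L(V,u)$ isometric to a Euclidean space, sphere, or lower-dimensional hyperbolic space, and is handled by induction on codimension via the explicit change-of-variable formulas of Lemmas \ref{1.2} and \ref{F'formula}. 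All limits at $t\to\pm\infty$ are then read off from closed-form solutions together with the conformal maps $\Psi_{\epsilon,r}$, $\Phi$, $\varphi$.

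The proposal as written has genuine gaps, essentially at every point where the hyperbolic case differs from the Euclidean/spherical one. (i) $\mathrm{Vol}(M_{v(t)})$ is infinite whenever $M$ is non-compact, which is the case for every full $M$ with $l\geq 1$ and for many non-full ones; your Lyapunov function is undefined exactly where it is needed, both to force collapse onto $\Sigma$ in forward time and to preclude backward collision with $\Sigma$. (ii) The ``Coxeter-type symmetry'' you invoke for the global orbit analysis is precisely what fails in $\mathbb{H}^m(-1)$: the reflection group acts only through the spherical factor $M'$, and the normal directions coming from the hyperbolic and umbilical factors carry no reflection symmetry, so the chamber combinatorics of \cite{X.CT} do not transfer. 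Recovering enough structure to run your argument amounts to reproving Wu's theorem, which you neither cite nor establish. (iii) The forward-time dichotomy (interior critical point of $H$ exactly when $M$ is not flat) and the backward-time convergence of $v(t)/|v(t)|$ are asserted, not proved; you flag the latter yourself as unresolved. (iv) The invertibility of $I-A_{\xi_\infty}$ does not follow from $\xi_\infty$ lying in an open Weyl chamber: chamber walls detect finite focal points, i.e.\ eigenvalues $\lambda$ of $A_{\hat v}$ with $\coth|v|=\lambda>1$, and do not by themselves exclude an eigenvalue equal to $1$ in the limiting direction (the borderline horospherical case), which is exactly when the limit map degenerates. (v) Finally, ``flat normal bundle in the ideal boundary'' is defined in the paper with respect to the round metric on $S^{m-1}$ (Definition \ref{def:SubmBD}); establishing it requires the conformality of the limit maps and the conformal invariance of normal curvature (Lemmas \ref{lem:bdtrans}, \ref{flat}, \ref{lem:varphi}), none of which is supplied by ``inherited from the parallel structure of $\nu M$.'' In short, the proposal is a plausible program whose two acknowledged obstacles, plus the items above, contain all of the actual mathematical content.
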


\noindent
The precise meaning of smooth submanifold with flat normal bundle in the ideal boundary of $\mathbb{H}^{m}(-1)$ will be given in Definition \ref{def:SubmBD}. In this paper,
we will also prove that minimal isoparametric submanifolds in $\mathbb{H}^{m}(-1)$ are always totally geodesic (see Lemma \ref{4.2.1}).
Note that $M$ is always flat when its dimension is one. Hence, the flatness condition in Part (3) of Theorem \ref{1.17}
does not accurately reflect the type of $M$. We will discuss how to modify part (3) when dimension of $M$ is one
in Remark \ref{dimension,one}.

During the proof of Theorem \ref{1.17}, we also obtain a recursive method for constructing explicit solutions for MCF of
isoparametric submanifolds in hyperbolic spaces (see Remark \ref{rem:RecCons}). The relation between
MCF of such submanifolds in hyperbolic spaces and MCF of them considered as submanifolds in Lorentz spaces
plays a vital role in the proof of above theorem. However, unlike in Euclidean spaces and spheres considered in \cite{X.C}, MCF of some isoparametic submanifolds in Lorentz spaces do not have ancient solutions.

This paper is organized as follows:
In Section \ref{1.4}, we review basic definitions and properties for isoparametric submanifolds and ideal boundaries of
hyperbolic spaces. In Section \ref{1.10}, we study  MCF for isoparametric submanifolds in Lorentz spaces. The proof of Theorem~\ref{1.17} will be given in Section \ref{1.6}.

The authors would like to thank an anonymous referee for suggestions which is very helpful in improving the presentation of this paper.

\section{Preliminary} \label{1.4}

In this section, we review basic concepts and properties for isoparametric submanifolds and ideal boundaries
of hyperbolic spaces, which will be needed later in this paper. We will also clarify the meaning of
smooth submanifolds with flat normal bundles in the ideal boundaries of hyperbolic spaces.

\subsection{Isoparametric submanifolds in hyperbolic spaces.}

For any non-negative integer $m$, let $\mathbb R^{m,1}$ be the standard Lorentz space, which is an $m+1$ dimensional real vector space equipped with the following Lorentzian metric:
      \begin{align*}
       \left<x,y\right>=\sum_{i=1}^{m} x_i y_i -x_{m+1} y_{m+1}
      \end{align*}
for $ x=(x_1, \cdots, x_{m+1}),  y=(y_1,\cdots, y_{m+1}) \in \mathbb R^{m,1}$.
We refer to the book \cite{N.S} for general theory of Lorentzian geometries.
The hyperbolic space $\mathbb{H}^{m}( -r)$ with sectional curvature $-1/r$ can be realized as a submanifold
of $\mathbb R^{m,1}$ with the induced Riemannian metric in the following way
      \begin{align*}
      	\mathbb{H}^{m}(-r)=\{x\in \mathbb R^{m,1}\mid \left<x,x\right>=-r,  \, x_{m+1}>0 \},
      \end{align*}
where $r>0$ is a constant.
It is well known (cf. \cite{T.C}, \cite{T.C.2}, \cite{B.W}) that any isoparametric submanifold in  {\it space forms}  (i.e. Euclidean spaces, spheres, and Hyperbolic spaces) is an open part of a complete isoparametric submanifold. Without loss of generality, we require all isoparametric  submanifolds in space forms are complete throughout this paper.

The concept of isoparametric submanifolds in space forms can be extended to Lorentz spaces.	
 A submanifold $M$ in $\mathbb R^{m,1}$ is said to be {\it Riemannian} (also called spacelike in some literature) if the induced metric on $M$ is positive definite.
$M$ is called  an {\it isoparametric submanifold} in $\mathbb R^{m,1}$ if $M$ is a Riemannian submanifold, the normal bundle of $M$ in $\mathbb R^{m,1}$ is globally  flat, and the principal curvatures along any parallel normal vector field are constant (cf.  \cite{W.B}).
The following lemma is essentially due to Wu (cf. \cite{B.W}):
\begin{lem} \label{1.15}
 For any submanifold $M \subset \mathbb{H}^{m}(-r)\subset \mathbb R^{m,1} $,
	$M$ is isoparametric in  $\mathbb{H}^{m}(-r)$ if and only if $M$ is isoparametric in $\mathbb R^{m,1}$.
\end{lem}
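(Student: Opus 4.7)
The plan is to exploit the fact that $\mathbb{H}^m(-r)$ sits inside $\mathbb{R}^{m,1}$ as the level set $\langle x,x\rangle = -r$, so that the position vector field itself serves as a distinguished normal direction. Specifically, for $M\subset \mathbb{H}^m(-r)$, the normal bundle decomposes as
\begin{equation*}
\nu^{\mathbb{R}^{m,1}}(M) \;=\; \nu^{\mathbb{H}}(M) \,\oplus\, \mathbb{R}\!\cdot\! x,
\end{equation*}
where $x$ denotes the position vector along $M$. The first step is to record the geometry of this extra factor: differentiating $\langle x,x\rangle = -r$ along $M$ yields that $x$ is normal to $\mathbb{H}^m$, and since $\bar{\nabla}_X x = X$ in $\mathbb{R}^{m,1}$ (the flat connection), one reads off that $x$ is a parallel normal field of $M$ in $\mathbb{R}^{m,1}$ with shape operator $A_x = -\mathrm{Id}$, hence with constant principal curvatures all equal to $-1$.

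Next I would compare the induced normal connections and shape operators on the $\nu^{\mathbb{H}}(M)$ factor. For any $\xi$ normal to $M$ in $\mathbb{H}^m$ and $X$ tangent to $M$, the Gauss formula for $\mathbb{H}^m\subset\mathbb{R}^{m,1}$ gives $\bar{\nabla}_X\xi = \nabla^{\mathbb{H}}_X\xi + \tfrac{1}{r}\langle X,\xi\rangle x = \nabla^{\mathbb{H}}_X\xi$ because $\xi\perp X$. Splitting both sides into tangential and normal components relative to $M$ inside the respective ambient space yields $A^{\mathbb{R}^{m,1}}_\xi = A^{\mathbb{H}}_\xi$ and $\nabla^{\perp,\mathbb{R}^{m,1}}_X\xi = \nabla^{\perp,\mathbb{H}}_X\xi$. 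Combined with $\nabla^{\perp,\mathbb{R}^{m,1}}_X x = 0$ from the previous step, this shows that $\mathbb{R}\!\cdot\! x$ and $\nu^{\mathbb{H}}(M)$ are parallel subbundles of $\nu^{\mathbb{R}^{m,1}}(M)$, and the normal connection on $\nu^{\mathbb{R}^{m,1}}(M)$ is the direct sum of the trivial connection on $\mathbb{R}\!\cdot\! x$ and $\nabla^{\perp,\mathbb{H}}$ on $\nu^{\mathbb{H}}(M)$. In particular, flatness of $\nu^{\mathbb{R}^{m,1}}(M)$ is equivalent to flatness of $\nu^{\mathbb{H}}(M)$.

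Finally, I would verify the principal-curvature condition. By the splitting of the normal connection, every parallel normal field in $\mathbb{R}^{m,1}$ has the form $\eta + c\,x$ with $c\in\mathbb{R}$ constant and $\eta$ parallel in $\nu^{\mathbb{H}}(M)$; conversely every $\eta$ parallel in $\nu^{\mathbb{H}}(M)$ is parallel in $\mathbb{R}^{m,1}$. Since
\begin{equation*}
A^{\mathbb{R}^{m,1}}_{\eta+cx} \;=\; A^{\mathbb{H}}_\eta \,-\, c\,\mathrm{Id},
\end{equation*}
the eigenvalues along $\eta+cx$ in $\mathbb{R}^{m,1}$ are just the eigenvalues of $A^{\mathbb{H}}_\eta$ shifted by $-c$. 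This immediately gives both implications: if $M$ is isoparametric in $\mathbb{H}^m(-r)$, every parallel $\eta+cx$ has constant principal curvatures; if $M$ is isoparametric in $\mathbb{R}^{m,1}$, then taking $c=0$ recovers constancy of the principal curvatures of $A^{\mathbb{H}}_\eta$ for every parallel $\eta$ in $\nu^{\mathbb{H}}(M)$.

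There is no real obstacle here beyond being careful with the Lorentzian signs (in particular $\langle x,x\rangle = -r$ gives $A_x = -\mathrm{Id}$ rather than $+\mathrm{Id}$) and with distinguishing "normal in $\mathbb{H}^m$" from "normal in $\mathbb{R}^{m,1}$" when invoking the Gauss and Weingarten formulas; once the splitting of $\nu^{\mathbb{R}^{m,1}}(M)$ and its compatibility with the normal connection are established, both parts of the isoparametric condition transfer automatically.
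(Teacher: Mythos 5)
Your proof is correct: the orthogonal splitting $\nu^{\mathbb{R}^{m,1}}(M)=\nu^{\mathbb{H}}(M)\oplus\mathbb{R}x$, the identities $A_x=-\mathrm{Id}$, $\nabla^{\perp}_X x=0$, $A^{\mathbb{R}^{m,1}}_\xi=A^{\mathbb{H}}_\xi$ and the coincidence of the two normal connections on $\nu^{\mathbb{H}}(M)$ are all verified with the right Lorentzian signs, and they do yield both implications. The paper itself gives no argument here --- it cites Proposition 1.10 of Wu for one direction and calls the other ``straightforward'' --- so your write-up is exactly the standard computation being delegated to that reference, with no gap.
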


\noindent
In fact, Proposition 1.10 in \cite{B.W} gives one direction of this lemma. The proof of another direction is straightforward.

A submanifold $M$ of $\mathbb{H}^{m}(-1)$  is {\it full} if $M$ is not included in any totally umbilical hypersurface of $\mathbb{H}^{m}(-1)$.
The following decomposition theorem was proved by Wu (see Corollary 2.6 in \cite{B.W}):
\begin{thm} \label{thm:Decomp}
	Assume  $M \subset \mathbb{H}^{m}(-1)\subset \mathbb R^{m,1}$. If $M$ is a full isoparametric submanifold,
then there exist $r \geq 1 $ and a  Lorentzian subspace $V$ of $\mathbb R^{m,1}$ with dimension $l+1$ such that
	\begin{align*}
		M=\mathbb{H}^{l}(- r)\times M',
	\end{align*}
	where $\mathbb{H}^l(- r) \subset V$ is one sheet of the hyperboloid
	$\{ x \in V \mid \left<x,x\right>=- r \}$
  and $M'$ is an isoparametric submanifold of the sphere $S^{m-l-1}(-1+r) \subset V^{\perp}$.
\end{thm}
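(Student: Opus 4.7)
By Lemma~\ref{1.15}, $M$ is isoparametric in the ambient Lorentz space $\mathbb{R}^{m,1}$, and the plan is to carry out the decomposition there and read off the two factors from the curvature normals. The key observation is that the position vector field $p \mapsto p$ is itself a parallel normal vector field to $M$ in $\mathbb{R}^{m,1}$ (since $\nabla^{\mathbb{R}^{m,1}}_{X} p = X$ is tangent to $M$) with shape operator $A_{p} = -\mathrm{Id}_{TM}$. Consequently every curvature normal $n_i$ of $M$ in $\mathbb{R}^{m,1}$ satisfies the universal identity $\langle n_i, p \rangle = -1$, so one may write $n_i = p + w_i$ with $w_i \perp p$; the $w_i$'s lie in $T_{p}\mathbb{H}^{m}(-1)$ and coincide with the curvature normals of $M$ viewed as a submanifold of $\mathbb{H}^{m}(-1)$.

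Next I would rule out constant parallel normals. A parallel normal $\eta$ of $M$ in $\mathbb{R}^{m,1}$ is constant as a vector iff $A_{\eta}\equiv 0$, which would force $M$ into a linear hyperplane $\eta^{\perp}$ and hence into a totally umbilical hypersurface of $\mathbb{H}^{m}(-1)$, contradicting fullness. After decomposing $TM=\bigoplus_{\alpha}E_{\alpha}$ into simultaneous eigenspaces of all shape operators, I apply a Moore-type splitting criterion for isoparametric submanifolds in semi-Euclidean spaces: if the index set partitions as $I_1\sqcup I_2$ with $\{w_{\alpha}:\alpha\in I_1\}\perp\{w_{\beta}:\beta\in I_2\}$ and each family generates a subbundle invariant under the normal connection, then the Codazzi and Ricci equations together with the completeness of $M$ force $M$ to split globally as a Riemannian product. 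I would choose the partition so that the leaf $M_1$ through a fixed basepoint $p_0$ associated with $I_1$ is itself a hyperboloid of a Lorentzian subspace, and define $V$ to be the linear span of $p_0$ together with $T_{p_0}M_1$.

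One checks that $V$ has signature $(l,1)$ and that the Moore splitting yields $M=M_1\times M'\subset V\oplus V^{\perp}$. Decomposing $p=y+z$ with $y\in V$ and $z\in V^{\perp}$, the constraint $\langle p,p\rangle=-1$ becomes $\langle y,y\rangle+\|z\|^{2}=-1$; since $\langle y,y\rangle$ is a constant $-r$ on $M_1$, this forces $M_1=\mathbb{H}^{l}(-r)$, $\|z\|^{2}=r-1$, hence $r\geq 1$ and $M'\subset S^{m-l-1}(-1+r)$, and the isoparametric property of $M'$ in that sphere is inherited by restricting the parallel normal bundle and shape operators of $M$ to the $V^{\perp}$ directions. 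The main obstacle is the passage from local to global in the Moore step: one must verify, via the Codazzi and Ricci equations for isoparametric submanifolds in $\mathbb{R}^{m,1}$, that the chosen partition of curvature normals corresponds to a pair of parallel, mutually orthogonal distributions on all of $M$, so that the local product structure near $p_0$ integrates to a complete hyperbolic factor $\mathbb{H}^{l}(-r)$ and a complete isoparametric factor $M'$ in the sphere.
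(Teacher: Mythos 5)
First, a point of reference: the paper does not prove Theorem \ref{thm:Decomp} at all --- it is imported verbatim from Wu's work (Corollary 2.6 of \cite{B.W}), so there is no in-paper argument to compare yours against. Judged on its own, your outline follows the same general route as Wu's structure theory: pass to $\mathbb{R}^{m,1}$ via Lemma \ref{1.15}, use the position vector as a parallel normal with $A_p=-\mathrm{Id}$ so that every curvature normal satisfies $\langle n_\alpha,p\rangle=-1$, rule out nonzero parallel normals with vanishing shape operator by fullness, and then split $M$ by a Moore-type lemma. (A minor slip: a constant parallel normal $\eta$ with $A_\eta\equiv 0$ confines $M$ to an affine hyperplane $\{x\mid\langle x,\eta\rangle=c\}$, not necessarily the linear one $\eta^{\perp}$; but its intersection with $\mathbb{H}^m(-1)$ is still a totally umbilical hypersurface, so the contradiction with fullness survives.)

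The genuine gap is at the pivotal step. You write that you ``would choose the partition $I_1\sqcup I_2$ so that the leaf $M_1$ through $p_0$ is itself a hyperboloid of a Lorentzian subspace'' --- but the existence of such a partition is precisely what has to be proved, and as written the choice presupposes the conclusion. Concretely, one must show that among the curvature normals $n_\alpha=p+w_\alpha$ there is exactly one distinguished index $\alpha_0$ (with $n_{\alpha_0}$ timelike, i.e. $\|w_{\alpha_0}\|<1$), that $n_{\alpha_0}$ is orthogonal to every other curvature normal, and that $\dim E_{\alpha_0}=l=\dim V-1$, so that the complete leaf fills out the whole hyperboloid $\{x\in V\mid\langle x,x\rangle=-r\}$ rather than a proper submanifold of it. None of this follows from the Codazzi and Ricci equations alone; in \cite{B.W} it rests on the Lorentzian analogue of the Coxeter-group structure of the curvature normals (which curvature distributions can fail to be orthogonal, how the associated reflections act on the affine normal spaces, and why a timelike curvature normal must be isolated and unique). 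Once that partition is in hand, the remaining steps you give --- that $V=\mathbb{R}p_0\oplus T_{p_0}M_1$ has signature $(l,1)$ because $p_0$ is timelike and orthogonal to the spacelike $T_{p_0}M_1$, the constancy of $\langle y,y\rangle=-r$ and $\|z\|^2=r-1\geq 0$ forced by $\langle p,p\rangle=-1$, and the inheritance of the isoparametric property by $M'$ in $S^{m-l-1}(r-1)$ --- are correct and essentially routine. So the skeleton is right, but the theorem's actual content lives in the step you left as a ``choice.''
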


\noindent
In this theorem $S^k(a)$ denotes the $k$-dimensional sphere with radius $\sqrt{a}$ centered at origin.

If an isoparametric submanifold $M \subset \mathbb{H}^{m}(-1)$ is not full, then it must be an isoparametric
submanifold of a totally umbilical hypersurface (with induced metric) in $\mathbb{H}^{m}(-1)$ (see Proposition 1.3 in \cite{B.W}).
It is well known that any totally umbilical complete hypersurface of $\mathbb{H}^{m}(-1)$ must have the form
$L(V,u)=\mathbb{H}^{m}(-1)\bigcap (V+u)$, where $V$ is a linear hyperplane in  $\mathbb R^{m,1}$ and $u \in \mathbb R^{m,1}$ (see, for example, \cite{ABD}, \cite{BCO} and \cite{B.W}). With the induced metric,
$L(V,u)$ is isometric to a Euclidean space, or a sphere, or a hyperbolic space if the restriction of
Lorentz metric $\left<\cdot, \, \cdot\right>$ to $V$ is  degenerate, or positive definite, or a nondegenerate symmetric bilinear form with index $1$ respectively. Hence if an isoparametric submanifold $M \subset \mathbb{H}^{m}(-1)$ is not full, it must be
an isoparametric submanifold in a Euclidean space, or a sphere, or a lower dimensional hyperbolic space.

\subsection{Ideal boundaries of hyperbolic spaces.}

Let $X$ be a complete Riemannian manifold with non-positive curvature.
Two geodesic rays $c_i: [0, \infty) \longrightarrow M$ for $i=1, 2$ are equivalent if the distance between $c_1(t)$ and $c_2(t)$ are bounded from above by a constant for all $t \geq 0$.
The {\it ideal boundary} of $X$, denoted by $\partial X$, is defined to be the set of all equivalence classes of geodesic rays in $X$.
There exists a natural topology on $\overline{X}=X \bigcup \partial X$ whose restriction to $X$ is the original topology
induced from the Riemannian metric on $X$
(see, for example, Chapter II.8 section 8.5 in \cite{B.H}).

To prove parts (3) and (4) of Theorem \ref{1.17}, we need a more explicit description of the ideal boundary of $\mathbb{H}^{m}(-r)$.
Let $\mathbb B^m$ be the open unit ball in $\mathbb R^m$ with the standard Euclidean metric, i.e.
 \begin{equation} \label{eqn:ball}
 \mathbb B^m :=\{y \in \mathbb R^m \mid \Vert y \Vert^2<1\}.
 \end{equation}
For $x=(x_1, \cdots, x_m, x_{m+1})\in \mathbb{H}^{m}(-1) \subset {\mathbb R}^{m,1}$, define
\begin{equation}  \label{9.2}
 \psi(x) :=  \left( \frac{x_1}{1+x_{m+1}}, \cdots, \frac{x_m}{1+x_{m+1}} \right).
\end{equation}
This gives a conformal diffeomorphism $\psi: \mathbb{H}^{m}(-1) \longrightarrow \mathbb B^m$.
Moreover, $\psi$ can be extended to a homeomorphism, also denoted by $\psi$, from $\overline{\mathbb{H}^{m}(-1)}$
to $\overline{\mathbb B^m} :=\{y \in \mathbb R^m \mid \Vert  y \Vert^2 \leq 1\}$.
In particular, it induces a homeomorphsim from the  ideal boundary of ${\mathbb{H}^{m}(-1)}$
to
\[ \partial \mathbb{B}^m := S^{m-1}=\{y \in \mathbb R^m \mid \Vert y \Vert^2 = 1\}.\]

\begin{rmk}
Note that $\psi^{-1}$ is the map given in Chapter I.6 section 6.7 of \cite{B.H}.
Pulling back the metric on ${\mathbb{H}^{m}(-1)}$ using $\psi^{-1}$, we obtain a hyperbolic
metric on ${\mathbb B^m}$ which has the form $4(1-\|y\|^2)^{-2} g_E$ for $y \in {\mathbb B^m}$, where $g_E$ is
the standard Euclidean metric.
${\mathbb B^m}$ with this hyperbolic metric is called the {\it Poincar\'{e} ball model}.
In this paper, we will always use the standard Euclidean metric on ${\mathbb B^m}$ unless otherwise stated.
\end{rmk}

More generally, let
$\epsilon = \{\epsilon_1,\cdots, \epsilon_{m+1}\}$ be an arbitrary
orthonormal basis of $\mathbb{R}^{m,1}$
satisfying the condition
\begin{equation} \label{eqn:condbasis}
\left<\epsilon_{i},\epsilon_{i}\right> = 1, \hspace{20pt}
\left<\epsilon_{m+1},\epsilon_{m+1}\right>=-1,  \hspace{20pt}
\left<\epsilon_{m+1},(0, \cdots, 0, 1) \right> < 0
\end{equation}
for $1 \leq i \leq m$.
For $a_1, \cdots, a_{m+1} \in \mathbb{R}$, define
\[ \Phi_\epsilon \left( \sum_{i=1}^{m+1} a_i \epsilon_i \right) := (a_1, \cdots, a_{m+1}). \]
Then $\Phi_\epsilon: \mathbb{R}^{m,1} \longrightarrow \mathbb{R}^{m,1}$
is an isometric transformation which preserves $\mathbb{H}^{m}(-r)$ for all $r > 0$.
We can define
 $\Psi_{\epsilon, r}: \mathbb{H}^{m}(-r) \longrightarrow \mathbb{B}^m $ by
 \begin{equation} \label{eqn:PsiEr}
 \Psi_{\epsilon, r} \left( x \right) := \frac{1}{a_{m+1} + \sqrt{r}} (a_1, \cdots, a_{m})
 \end{equation}
for $x=\sum_{i=1}^{m+1} a_i \epsilon_i \in \mathbb{H}^{m}(-r)$.
Then $\Psi_{\epsilon, r}$ is a conformal diffeomorphism which can be extended to a homeomorphism,
also denoted by $\Psi_{\epsilon, r}$,
from $\overline{\mathbb{H}^{m}(-r)}$ to $\overline{\mathbb{B}^m}$.
In particular, it induces a homeomorphism from ideal boundary $\partial {\mathbb{H}^{m}(-r)}$ to $\partial {\mathbb{B}^m}$.
This gives an identification of $\partial {\mathbb{H}^{m}(-r)}$ with $\partial {\mathbb{B}^m}$ which depends on
the choice of the orthonormal basis $\epsilon$.

Let $e = \{e_1, \cdots, e_{m+1}\}$ be the standard basis of $\mathbb{R}^{m,1}$, i.e.
$e_i = (0, \cdots, 0, 1, 0, \cdots, 0)$ where the $i$-th coordinate is $1$ and all other coordinates are $0$.
Then the map $\Psi_{e, r} \circ \Psi_{\epsilon, r}^{-1}$ is a homeomorphism from $\overline{\mathbb{B}^m}$ to itself, which is also a diffeomorphism when restricted to ${\mathbb{B}^m}$.
We have the following
\begin{lem} \label{lem:bdtrans}
Let $\Theta: S^{m-1} \longrightarrow S^{m-1}$ be the restriction of $\Psi_{e, r} \circ \Psi_{\epsilon, r}^{-1}$
to $\partial \mathbb{B}^m = S^{m-1}$. Then $\Theta$ is a conformal transformation with respect to the standard
metric on the unit sphere $S^{m-1}$.
\end{lem}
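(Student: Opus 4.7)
The plan is to identify $\Theta$ as the boundary restriction of a global hyperbolic isometry of the Poincar\'e ball model, and then to extract conformality on $S^{m-1}$ from the conformal nature of such an isometry.

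First, a direct substitution into \eqref{eqn:PsiEr} gives $\Psi_{\epsilon,r} = \Psi_{e,r} \circ \Phi_\epsilon$, so
\[
F := \Psi_{e,r} \circ \Psi_{\epsilon,r}^{-1} = \Psi_{e,r} \circ \Phi_{\epsilon}^{-1} \circ \Psi_{e,r}^{-1}, \qquad \Theta = F\bigl|_{S^{m-1}}.
\]
The condition \eqref{eqn:condbasis} guarantees that $\Phi_\epsilon^{-1}$ is Lorentz orthogonal and maps the positive sheet $\mathbb{H}^m(-r)$ into itself, so its restriction to $\mathbb{H}^m(-r)$ is a hyperbolic isometry. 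Since $\Psi_{e,r}$ is an isometry between $\mathbb{H}^m(-r)$ and the Poincar\'e ball $(\mathbb{B}^m, g_H)$ with $g_H = 4r(1-\|y\|^2)^{-2} g_E$, the conjugate $F$ is an isometry of $(\mathbb{B}^m, g_H)$.

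Because $g_H$ is a conformal rescaling of $g_E$, the relation $F^* g_H = g_H$ reduces algebraically to
\[
F^* g_E = \lambda \, g_E, \qquad \lambda(y) = \biggl( \frac{1 - \|F(y)\|^2}{1 - \|y\|^2} \biggr)^{\!2},
\]
so $F$ is conformal with respect to $g_E$ on the open ball. The critical step will be to propagate this conformality to the boundary. For that I would show that $F$ extends as a smooth diffeomorphism of $\overline{\mathbb{B}^m}$: writing
\[
\Psi_{e,r}^{-1}(y) = \frac{\sqrt{r}}{1-\|y\|^2} \bigl( 2 y_1, \ldots, 2 y_m, \, 1 + \|y\|^2 \bigr),
\]
composing with the linear map $\Phi_\epsilon^{-1}$ and then with $\Psi_{e,r}$, the factor $(1-\|y\|^2)^{-1}$ cancels and $F$ becomes a ratio of polynomials in $y$. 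Lorentz orthonormality of $\{\epsilon_i\}$ together with the sign condition in \eqref{eqn:condbasis} yield $(\epsilon_{m+1})_{m+1}^2 = 1 + \sum_{j \le m} (\epsilon_j)_{m+1}^2$ and $(\epsilon_{m+1})_{m+1} > 0$; by Cauchy--Schwarz the denominator polynomial arising above is then strictly positive on all of $\overline{\mathbb{B}^m}$.

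With $F$ smooth up to the boundary, for any $p \in S^{m-1}$ and any unit vector $v \in T_p S^{m-1}$ continuity of $dF$ yields
\[
\|d\Theta_p(v)\|_E^2 = \lim_{y \to p} \|dF_y(v)\|_E^2 = \lim_{y \to p} \lambda(y) =: \Lambda(p),
\]
a positive finite value independent of $v$. Since the round metric on $S^{m-1}$ is by definition the restriction of $g_E$, this is precisely the pointwise conformality of $\Theta$. The main obstacle will be the smoothness step above: $\lambda$ has indeterminate form $0/0$ on $\partial \mathbb{B}^m$, so only the explicit rational representation of $F$ together with the positivity of its denominator ensures a finite positive limit.
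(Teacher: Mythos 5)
Your proof is correct, but it follows a genuinely different route from the paper's. The paper works entirely on the boundary sphere: it writes out $\Theta(a)=h(a,c)^{-1}\bigl(v_{m+1}+\sum_i a_i v_i\bigr)$ with $h(a,c)=c_{m+1}+\sum_i a_i c_i$, proves $h>0$ on $S^{m-1}$ via $c_{m+1}\geq 1$ and the Cauchy inequality, and then verifies $\|\Theta_*(w)\|^2=h(a,c)^{-2}\|w\|^2$ by a direct computation using the Lorentz orthonormality relations $\|v_{m+1}\|^2=c_{m+1}^2-1$, $\|v_i\|^2=c_i^2+1$, $\langle v_j,v_k\rangle=c_jc_k$. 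You instead recognize $F=\Psi_{e,r}\circ\Phi_\epsilon^{-1}\circ\Psi_{e,r}^{-1}$ as a hyperbolic isometry of the Poincar\'e ball, hence Euclidean-conformal on the open ball, and propagate conformality to the boundary via the smooth extension of the rational expression for $F$. Your approach is more conceptual (it is the classical fact that isometries of $\mathbb{H}^m$ induce M\"obius transformations of the boundary sphere) and explains why the lemma must hold; the paper's is more elementary and yields the conformal factor explicitly, which is harmless here but is in the same spirit as the explicit boundary computations used later in Proposition \ref{1.9}. Two small points you should flesh out: the positivity of the denominator of $F$ on all of $\overline{\mathbb{B}^m}$ (not just on $S^{m-1}$, which is the paper's estimate $c_{m+1}-\sqrt{c_{m+1}^2-1}>0$) requires, after Cauchy--Schwarz, noting that the resulting quadratic $(c_{m+1}-1)s^2-2\sqrt{c_{m+1}^2-1}\,s+(c_{m+1}+1)$ in $s=\|y\|$ is a perfect square vanishing only at $s=\sqrt{(c_{m+1}+1)/(c_{m+1}-1)}>1$; and the positivity of your limit $\Lambda(p)$ needs the observation that $F^{-1}$ extends smoothly by the identical argument, so $dF_p$ is invertible on the boundary. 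Both are routine, so there is no real gap.
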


\noindent
{\bf Proof}:
For each $i=1, \cdots, m+1$, write $\epsilon_i=(v_i, c_i)$ where $v_i \in \mathbb{R}^m$ consists of the first $m$ coordinates of $\epsilon_i$ and $c_i \in \mathbb{R}$ is the last coordinate of $\epsilon_i$. Then for every $a=(a_1, \cdots, a_m) \in {\mathbb{B}^m}$,
\[ \Psi_{e, r} \circ \Psi_{\epsilon, r}^{-1} (a)
=  \frac{(1+\|a\|^2) v_{m+1} +2 \sum_{i=1}^m a_i v_i }{1-\|a\|^2+(1+\|a\|^2) c_{m+1} + 2 \sum_{i=1}^m a_i c_i}.\]
Therefore, if $\|a\|=1$, i.e. $a \in S^{m-1} = \partial {\mathbb{B}^m}$, we have
\begin{equation}
 \Theta (a)
    = \frac{1}{h(a,c)} \left( v_{m+1} + \sum_{i=1}^m a_i v_i \right),
\end{equation}
where
\[ h(a,c) := c_{m+1} +  \sum_{i=1}^m a_i c_i.\]

To show $\Theta$ is a smooth map, we only need to show $h(a,c) \neq 0$ for all $a \in S^{m-1} $.
We first observe that $c_i = - \langle\epsilon_i, e_{m+1}\rangle $ for all $i$. Hence
\[  e_{m+1} = c_{m+1} \epsilon_{m+1} - \sum_{i=1}^m c_i \epsilon_i, \]
and
\[ - c_{m+1}^2 + \sum_{i=1}^m c_i^2 \, = \, \langle e_{m+1}, e_{m+1}\rangle = -1. \]
Together with condition \eqref{eqn:condbasis}, we obtain $c_{m+1} \geq 1$. So
by Cauchy inequality, we have
\[ h(a,c) \geq c_{m+1} - \|a\| \left( \sum_{i=1}^m c_i^2 \right)^{\frac{1}{2}}= c_{m+1} - \sqrt{ c_{m+1}^2 -1} > 0 \]
 for all $a \in S^{m-1}$.
Hence $\Theta$ is smooth. Since we already know $\Theta$ is a homeomorphism by construction, this also shows that
$\Theta$ is a diffeomorphism from $S^{m-1}$ to itself.

Now we prove that $\Theta$  is a conformal map. In fact, for
$w=(w_1, \cdots, w_m) \in T_a S^{m-1} \subset \mathbb{R}^m,$
 we have
\[ \Theta_*(w) = \frac{1}{h(a,c)^2} \left( s(c,w) v_{m+1} + \sum_{i=1}^m  r_i(a, c ,w) v_i \right), \]
where
\[ s(c, w):= - \sum_{i=1}^m c_i w_i, \hspace{20pt} r_i(a, c ,w):= h(a,c) w_i + s(c, w) a_i.\]
Since $\epsilon_1, \cdots, \epsilon_{m+1}$ are orthonormal in $\mathbb{R}^{m,1}$, we have
\[ \|v_{m+1}\|^2 = c_{m+1}^2-1, \hspace{10pt}
\|v_i\|^2 = c_i^2+1, \hspace{10pt}
\langle v_j, v_k\rangle = c_j c_k \]
for all $i = 1, \cdots, m$ and $j, k = 1, \cdots, m+1$ with $j \neq k$.
Hence
\begin{eqnarray*}
 h(a,c)^4 \| \Theta_*(w) \|^2
 &=& \left( s(c,w) c_{m+1} + \sum_{i=1}^m r_i(a,c,w) c_i \right)^2
    - s(c, w)^2 + \sum_{i=1}^m r_i(a,c,w)^2
\end{eqnarray*}
It is straightforward to check that
\[ s(c,w) c_{m+1} + \sum_{i=1}^m r_i(a,c,w) c_i = 0\]
and
\[ - s(c, w)^2 + \sum_{i=1}^m r_i(a,c,w)^2 = h(a,c)^2 \|w\|^2\]
using the fact $\|a\|^2=1$ and $\sum_{i=1}^m a_i w_i = 0$ since $w \in T_a S^{m-1}$.
Therefore we have proved that
\begin{equation}
\|\Theta_*(w) \|^2 = \frac{\|w\|^2}{\left( c_{m+1}+\sum_{i=1}^m a_i c_i \right)^2}
\end{equation}
for all $a \in S^{m-1}$ and $w \in T_a S^{m-1}$.
This shows that $\Theta$ is a conformal map and the lemma is thus proved.
$\Box$

\begin{defi} \label{def:SubmBD}
Let $M$ be a subset of $\partial \mathbb{H}^m(-r)$. $M$ is called a smooth submanifold in the ideal boundary
of $\mathbb{H}^m(-r)$ if $\Psi_{\epsilon, r}(M)$ is a smooth submanifold of $S^{m-1}$ for any
orthonormal basis $\epsilon$ of $\mathbb{R}^{m,1}$ satisfying condition \eqref{eqn:condbasis}.
We say $M$ has flat normal bundle in the ideal boundary if
the normal bundle of  $\Psi_{\epsilon, r}(M)$ in $S^{m-1}$ is flat with respect to the standard metric on $S^{m-1}$.
\end{defi}
Lemma \ref{lem:bdtrans} shows that the notion of "smooth submanifold in the ideal boundary" is independent
of the choice of orthonormal basis $\epsilon$. Together with the lemma below, it also implies that
the notion for
"flatness of normal bundle of $M$"  does not depend on the choice of $\epsilon$.
Hence both notions are well defined.

\begin{lem}\label{flat}
Let $g'$ and $g$ be two conformally equivalent Riemannian metrics on a smooth manifold $N$
such that $g' =e^{2\rho}g$, where $\rho$ is a smooth function on $N$.
Assume $M$ is an $n$-dimensional submanifold of $N$.
\begin{itemize}
\item[(1)] Let  $R'^{\perp}$ and $R^{\perp}$ be the curvature operators on the normal bundle $\nu M$ of $M$ in $N$
with respect to $g'$ and $g$ respectively. Then we have
\[ R'^{\perp}=R^{\perp}.\]
In particular, $\nu M$ is flat with respect to $g'$ if and only if
it is flat with respect to $g$.

\item[(2)]  Let $H'(x)$ and $H(x)$ be the mean curvature vectors of $M$ at point $x$ with respect to $g'$ and $g$ respectively.
Then for all $x\in M$,
\[ H'(x)=-ne^{-2\rho(x)}({\rm grad} (\rho) \mid_x)^{\perp}+e^{-2\rho(x)}H(x), \]
 where superscript $\perp$ denotes the projection from $T_x N$ to the normal space $\nu_x M$,
 ${\rm grad}(\rho)$ is the gradient of $\rho$ with respect to $g$.
\end{itemize}
\end{lem}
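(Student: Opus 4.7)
\noindent
\textbf{Proof plan for Lemma \ref{flat}.}

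The foundation of the proof is the classical conformal transformation formula for the Levi-Civita connection: if $g'=e^{2\rho}g$, then for any vector fields $X,Y$ on $N$,
\[ \nabla'_X Y = \nabla_X Y + X(\rho)Y + Y(\rho)X - g(X,Y)\,{\rm grad}(\rho), \]
where ${\rm grad}(\rho)$ is taken with respect to $g$. I would first record this identity and then observe that, since $g'$ and $g$ agree up to a positive conformal factor, a vector at a point of $M$ is $g$-orthogonal to $TM$ if and only if it is $g'$-orthogonal to $TM$; in particular $\nu M$ is the same subbundle for both metrics and the tangential/normal projection does not depend on which of the two is used.

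For part (1), the plan is to derive the formula $\nabla'^{\perp}_X \xi = \nabla^{\perp}_X \xi + X(\rho)\xi$ for $X\in TM$ and a normal section $\xi$. This follows immediately from the transformation formula above because $g(X,\xi)=0$ kills the ${\rm grad}(\rho)$ term and $\xi(\rho)X$ is tangential, so it drops out when one takes normal parts. Thus $\nabla'^{\perp}$ differs from $\nabla^{\perp}$ by scalar multiplication by the 1-form $d\rho|_{TM}$. Plugging this into the definition of $R'^{\perp}(X,Y)\xi$, the mixed terms involving $\nabla^{\perp}_X\xi$ and $\nabla^{\perp}_Y\xi$ cancel by symmetry, and the remaining correction collapses to $\bigl(X(Y(\rho))-Y(X(\rho))-[X,Y](\rho)\bigr)\xi=0$, which gives $R'^{\perp}=R^{\perp}$.

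For part (2), my approach is to first transform the second fundamental form. For tangent fields $X,Y\in TM$, the terms $X(\rho)Y$ and $Y(\rho)X$ in the conformal connection formula are tangential, so taking normal parts yields
\[ II'(X,Y) = II(X,Y) - g(X,Y)\,\bigl({\rm grad}(\rho)\bigr)^{\perp}. \]
Now fix $x\in M$ and pick a $g$-orthonormal basis $\{e_i\}_{i=1}^n$ of $T_xM$; then $\{e^{-\rho(x)}e_i\}$ is $g'$-orthonormal. Using the bilinearity of $II$ and the tensoriality of the displayed formula, each summand $II'(e^{-\rho(x)}e_i,e^{-\rho(x)}e_i)$ produces a common factor $e^{-2\rho(x)}$, and summing over $i$ gives
\[ H'(x) = e^{-2\rho(x)}H(x) - n\,e^{-2\rho(x)}\bigl({\rm grad}(\rho)|_x\bigr)^{\perp}. \]

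The whole argument is essentially bookkeeping once the conformal transformation law for $\nabla$ is in hand. The only subtleties, and hence the likeliest places to slip up, are that ${\rm grad}(\rho)$ must consistently mean the $g$-gradient (not the $g'$-gradient, which would introduce an extra $e^{-2\rho}$), and that although $\nu M$ is the same set for the two metrics, inner products on it rescale, so the $g'$-orthonormal frame that appears in the trace defining $H'$ must be explicitly renormalized by $e^{-\rho}$ to produce the $n e^{-2\rho}$ factor.
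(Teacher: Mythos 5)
Your proposal is correct and follows essentially the same route as the paper: both start from the conformal transformation law for the Levi-Civita connection, deduce $\nabla'^{\perp}_X\xi=\nabla^{\perp}_X\xi+X(\rho)\xi$ for part (1) and let the correction terms cancel in the curvature, and for part (2) renormalize a $g$-orthonormal frame by $e^{-\rho}$ before tracing. The only cosmetic difference is that you phrase part (2) via the second fundamental form while the paper uses the shape operator $A'_{\eta}X=A_{\eta}X-\eta(\rho)X$; the computations are equivalent.
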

\noindent
{\bf Proof}:
First observe that since $g'$ and $g$ are conformally equivalent, the normal bundles of $M$ with respect to these two metrics are the same, which is denoted by $\nu M$. In particular, the normal projection $\perp$ is well defined.
Let  $\overline D$, $D$, $D^{\perp}$  (or $\overline{D'}$, $D'$, $D'^{\perp}$) be the Levi-Civita connections on $TN$, $TM$, $\nu M$ with respect to $g$ (or $g'$) respectively. For any local vector fields $X$ and $Y$ on $N$, it is well known that
\begin{equation} \label{eqn:connconf}
\overline{D'}_X Y = \overline{D}_X Y + S(X,Y),
\end{equation}
where
\begin{equation} \label{eqn:S}
 S(X,Y) := X(\rho) \, Y+Y(\rho) \, X - g(X,Y) \, {\rm grad}(\rho)
\end{equation}
(see, for example, Chapter 8 Exercises 5 in \cite{d.C}).

\vspace{6pt}
\noindent
{Proof of part (1)}:
For any local tangent vector field  $Y$ on $M$ and local normal vector field $\eta$ on $M$, we have
\begin{align} \label{eqn:STN}
	S(Y,\eta)^{\perp}&= \big( Y(\rho) \, \eta + \eta(\rho) \, Y - g(Y,\eta) \, {\rm grad}(\rho) \big)^{\perp} \,
	 =Y(\rho) \, \eta,
\end{align}
and
\begin{align} \label{eqn:DTN}
	D'^{\perp}_{Y} \, \eta
	&=\big( \overline{D'}_{Y} \, \eta \big)^{\perp}
	\, = \, \big( \overline{D}_{Y} \, \eta + S(Y,\eta) \big)^{\perp}
	\, = \, D^{\perp}_{Y} \, \eta + Y(\rho) \, \eta.
\end{align}
By equation \eqref{eqn:DTN}, for any local tangent vector fields $X$, $Y$ on $M$ and local normal vector field $\eta$
on $M$, we have
\begin{align}
	D'^{\perp}_{X} D'^{\perp}_{Y} \, \eta
	&=D^{\perp}_X \big( D'^{\perp}_{Y} \, \eta \big)
           + X(\rho)  D'^{\perp}_{Y} \, \eta \nonumber \\
	&=D^{\perp}_X \big( D^{\perp}_Y \, \eta + Y(\rho) \, \eta \big)
           + X(\rho) \big( D^{\perp}_Y \, \eta + Y(\rho) \, \eta \big)\nonumber \\
	&=D^{\perp}_X D^{\perp}_Y \, \eta +
             X Y(\rho) \, \eta + Y(\rho) \, D^{\perp}_X \, \eta
             + X(\rho) D^{\perp}_Y \, \eta + X(\rho) Y(\rho) \, \eta. \nonumber
\end{align}
Hence
\begin{align}
	D'^{\perp}_{X} D'^{\perp}_{Y} \, \eta - D'^{\perp}_{Y} D'^{\perp}_{X} \, \eta
	&=D^{\perp}_X D^{\perp}_Y \, \eta - D^{\perp}_Y D^{\perp}_X \, \eta
             + [X, Y](\rho) \, \eta  . \label{11.2}
\end{align}
Using equations \eqref{eqn:DTN} and \eqref{11.2}, we have
\begin{align*}
	R'^{\perp}(X,Y) \eta
    =& \big( D'^{\perp}_{X} D'^{\perp}_{Y} \, \eta - D'^{\perp}_{Y} D'^{\perp}_{X} \, \eta \big)
            - D'^{\perp}_{[X,Y]} \, \eta \\
	=& \big( D^{\perp}_X D^{\perp}_Y \, \eta - D^{\perp}_Y D^{\perp}_X \, \eta
             + [X, Y](\rho) \, \eta  \big)
	    -\big( D^{\perp}_{[X,Y]} \, \eta + [X,Y](\rho) \eta \big)\\
	=& R^{\perp}(X,Y)\eta.
\end{align*}
By definition, $\nu M$ is flat if its curvature operator is $0$. This finishes the proof of part (1)
of this lemma.

\vspace{6pt}
\noindent
{Proof of part (2)}:
Let $A'$ and $A$ be the shape operators  of $M$ in $N$  with respect to metrics $g'$ and $g$ respectively.
By equations \eqref{eqn:connconf} and \eqref{eqn:S}, for any local tangent vector field $X$ on $M$ and local normal
vector field $\eta$ on $M$, we have
\begin{equation} \label{eqn:A'}
A'_{\eta} X
=  - (\overline{D'}_{X} \eta)^{\top}
= - \left( \overline{D}_{X} \eta + S(X, \eta) \right)^{\top}
= A_{\eta} X - \eta(\rho) X,
\end{equation}
where superscript $\top$ means projection from $TN$ to $TM$.

Let $k$ be the codimension of $M$ in $N$.
Choose a local orthonormal frame $\{e_i \mid i=1, \cdots, n\}$ for $TM$ and
a local orthonormal  frame $\{\eta_\alpha \mid \alpha=1, \cdots, k\}$ for $\nu M$
 with respect to metric $g$.
Then $\{e'_i := e^{-\rho} e_i \mid i=1, \cdots, n\}$ is a local orthonormal frame of $TM$
and $\{\eta'_\alpha := e^{-\rho}\eta_\alpha \mid \alpha=1, \cdots, k\}$ is a local  orthonormal  frame  for $\nu M$
with respect to metric $g' = e^{2\rho}g $.

By equation \eqref{eqn:A'}, for any $x\in M$, we have
\begin{align}
	H'(x)
	&=\sum_{i=1}^n \sum_{\alpha=1}^{k} g'( A'_{\eta'_{\alpha}} e'_i, \,  e'_i) \, \eta'_{\alpha}
    = e^{-2 \rho} \sum_{i=1}^n  \sum_{\alpha=1}^{k} g( A'_{\eta_{\alpha}} e_i, \,  e_i) \, \eta_{\alpha} \nonumber \\
    &= e^{-2 \rho} \sum_{i=1}^n  \sum_{\alpha=1}^{k} g( A_{\eta_{\alpha}} e_i - \eta_{\alpha} (\rho) e_i, \,  e_i)
          \, \eta_{\alpha}
    = e^{-2 \rho} \left( H(x) - n  ({\rm grad}(\rho))^\perp  \right).
     \nonumber
\end{align}
This finishes the proof of the lemma.
$\Box$

\begin{rmk}
Part (2) of Lemma \ref{flat} shows that the constancy of $\|H(x)\|$ may not be preserved under conformal transformations.
This indicates that we may not have a definition of minimal or isoparametric submanifolds in the ideal boundary of
 hyperbolic spaces which does not rely on
the choice of orthonormal basis $\epsilon$ of $\mathbb{R}^{m,1}$.
\end{rmk}

The following lemma will be useful in studying the limit of a curve in a hyperbolic space which converges to a point in the ideal boundary.

\begin{lem} \label{lem:convBD}
Given a curve $\gamma(t) \in \mathbb{H}^{m}(-r)$.
Let $\epsilon=\{\epsilon_1, \cdots, \epsilon_{m+1} \}$
be an orthonormal basis of $\mathbb{R}^{m,1}$ satisfying condition \eqref{eqn:condbasis}.
Write $\gamma(t) = \sum_{i=1}^{m+1} a_i(t) \epsilon_i$. Then $\gamma(t)$ converges to a point
in the ideal boundary $\partial \mathbb{H}^{m}(-r)$ as $t \rightarrow T$ for some $T$ if and only if
$ a_{m+1}(t) \rightarrow \infty$ and $\frac{\gamma(t)}{a_{m+1}(t)}$ converges to a point in
$\mathbb{R}^{m,1}$ as $t \rightarrow T$. In this case,
\begin{equation} \label{eqn:limitbd}
 \lim_{t \rightarrow T} \,\, \Psi_{\epsilon, r}(\gamma(t))
       \,\, = \,\,\lim_{t \rightarrow T} \,\, \frac{1}{a_{m+1}(t)} (a_1(t), \cdots, a_m(t)) \in S^{m-1}.
\end{equation}
\end{lem}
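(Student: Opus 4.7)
The plan is to reduce the statement to the definition of the ideal boundary via the homeomorphism $\Psi_{\epsilon,r}$. By construction, $\gamma(t)$ converges to a point in $\partial\mathbb{H}^{m}(-r)$ as $t\to T$ if and only if $\Psi_{\epsilon,r}(\gamma(t))$ converges to a point in $\partial\mathbb{B}^m = S^{m-1}$. Using the explicit formula \eqref{eqn:PsiEr}, we have
\[
\Psi_{\epsilon,r}(\gamma(t)) \,=\, \frac{1}{a_{m+1}(t)+\sqrt{r}}\bigl(a_1(t),\dots,a_m(t)\bigr).
\]
Since $\Phi_\epsilon$ is an isometry preserving $\mathbb{H}^m(-r)$, the coefficients satisfy the hyperboloid relation $\sum_{i=1}^m a_i(t)^2 = a_{m+1}(t)^2 - r$ together with $a_{m+1}(t) \geq \sqrt{r}>0$.

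First I would compute the norm squared of $\Psi_{\epsilon,r}(\gamma(t))$ using the hyperboloid relation:
\[
\|\Psi_{\epsilon,r}(\gamma(t))\|^2 \,=\, \frac{a_{m+1}(t)^2 - r}{(a_{m+1}(t)+\sqrt{r})^2} \,=\, \frac{a_{m+1}(t)-\sqrt{r}}{a_{m+1}(t)+\sqrt{r}}.
\]
Convergence to the ideal boundary forces $\|\Psi_{\epsilon,r}(\gamma(t))\|\to 1$, and this holds exactly when $a_{m+1}(t)\to\infty$. This pins down the first required condition and also forces $\sqrt{r}/(a_{m+1}(t)+\sqrt{r}) \to 0$, so that the factor $(a_{m+1}(t)+\sqrt{r})^{-1}$ is asymptotically equivalent to $a_{m+1}(t)^{-1}$.

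Next I would observe that once $a_{m+1}(t)\to\infty$, the limits
\[
\lim_{t\to T}\,\Psi_{\epsilon,r}(\gamma(t)) \quad\text{and}\quad \lim_{t\to T}\,\frac{1}{a_{m+1}(t)}\bigl(a_1(t),\dots,a_m(t)\bigr)
\]
exist simultaneously and agree, which gives \eqref{eqn:limitbd}. To convert the second statement into the claimed form about $\gamma(t)/a_{m+1}(t)$, I expand $\gamma(t)/a_{m+1}(t)$ in the basis $\epsilon$ as
\[
\frac{\gamma(t)}{a_{m+1}(t)} \,=\, \sum_{i=1}^{m}\frac{a_i(t)}{a_{m+1}(t)}\,\epsilon_i + \epsilon_{m+1},
\]
whose $\mathbb{R}^{m,1}$-convergence is equivalent to the convergence of $(a_1(t),\dots,a_m(t))/a_{m+1}(t)$ in $\mathbb{R}^m$. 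Finally, the limit lies on $S^{m-1}$ because its norm equals $\lim \|\Psi_{\epsilon,r}(\gamma(t))\|=1$.

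I do not expect a real obstacle here; the argument is essentially an unwinding of the Poincaré-ball formula together with the hyperboloid constraint. The only subtlety worth flagging is the need to use condition \eqref{eqn:condbasis} (via $\Phi_\epsilon$ preserving $\mathbb{H}^m(-r)$) to guarantee $a_{m+1}(t)\geq\sqrt{r}>0$, so that dividing by $a_{m+1}(t)$ is legitimate and the asymptotic replacement $a_{m+1}(t)+\sqrt{r}\sim a_{m+1}(t)$ is justified.
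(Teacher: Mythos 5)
Your proposal is correct and follows essentially the same route as the paper's proof: compute $\|\Psi_{\epsilon,r}(\gamma(t))\|^2=\frac{a_{m+1}(t)-\sqrt{r}}{a_{m+1}(t)+\sqrt{r}}$ from the hyperboloid relation, use that $\Psi_{\epsilon,r}$ is a homeomorphism of $\overline{\mathbb{H}^m(-r)}$ onto $\overline{\mathbb{B}^m}$, note $\frac{a_{m+1}(t)}{a_{m+1}(t)+\sqrt{r}}\to 1$, and expand $\gamma(t)/a_{m+1}(t)$ in the basis $\epsilon$. Your extra remark on why $a_{m+1}(t)\geq\sqrt{r}>0$ is a welcome clarification of a point the paper leaves implicit.
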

\noindent
{\bf Proof}:
First observe that $\sum_{i=1}^m a_i(t)^2 = a_{m+1}(t)^2 -r$ since $\gamma(t) \in \mathbb{H}^{m}(-r)$. Hence
\[ \|\Psi_{\epsilon, r}(\gamma(t))\|^2 = \frac{a_{m+1}(t) - \sqrt{r}}{a_{m+1}(t) + \sqrt{r}}, \]
and $\|\Psi_{\epsilon, r}(\gamma(t))\| \rightarrow 1$ if and only if
$ a_{m+1}(t) \rightarrow \infty$.

Since $\Psi_{\epsilon, r}: \overline{\mathbb{H}^{m}(-r)} \longrightarrow \overline{\mathbb{B}^m}$
is a homeomorphism, $\gamma(t)$ converges to a point
on $\partial \mathbb{H}^{m}(-r)$  if and only if $\Psi_{\epsilon, r}(\gamma(t))$ converges to a point
on $S^{m-1}$. This implies $ a_{m+1}(t) \rightarrow \infty$ and
equation \eqref{eqn:limitbd} holds since
$\lim_{t \rightarrow T} \,\, \frac{a_{m+1}(t)}{a_{m+1}(t)+\sqrt{r}} = 1$.
Moreover, existence for the limit on the right hand side of equation \eqref{eqn:limitbd}
is equivalent to the existence of
\[ \lim_{t \rightarrow T} \frac{\gamma(t)}{a_{m+1}(t)} = \sum_{i=1}^{m+1} \lim_{t \rightarrow T} \frac{a_i(t)}{a_{m+1}(t)} \epsilon_i,\]
which is a point in $\mathbb{R}^{m,1}$ if exists.
The lemma is thus proved.
$\Box$

\section{MCF for isoparametric submanifolds in Lorentz spaces} \label{1.10}\

Note that in Theorem \ref{thm:Decomp}, the ambient Lorentz space $\mathbb R^{m,1}$ splits as a product
of $V$ and $V^{\perp}$, but the hyperbolic space $\mathbb{H}^{m}(-1)$ does not split as a product of two Riemannian
manifolds. Moreover, $\mathbb R^{m,1}$ is a linear space while $\mathbb{H}^{m}(-1)$ is not linear.
Therefore it is convenient to study MCF in $\mathbb R^{m,1}$ instead of $\mathbb{H}^{m}(-1)$.
For a submanifold $M \subset \mathbb{H}^{m}(-1) \subset  \mathbb R^{m,1}$, we call MCF of $M$ in $\mathbb{H}^{m}(-1)$ (respectively
in $\mathbb{R}^{m,1}$) the {\it hyperbolic MCF} (respectively {\it Lorentzian MCF}) of $M$.
In this  section, we will study relations between hyperbolic MCF and Lorentzian MCF as well as properties of Lorentzian
MCF for isoparametric submanifolds.

We first recall a basic fact in semi-Riemannian geometry:
Assume $X$ is a semi-Riemannian  manifold and  $M, N$  are Riemannian submanifolds in $X$
with $M \subset N$.
Then the mean curvature vector field of $M$ as a submanifold in $N$ is just the orthogonal projection
of the mean curvature vector field of $M$ as a submanifold in $X$ to the tangent spaces of $N$.
In particular, we have
\begin{lem} \label{4.1}
		Assume $M$ is an $n$-dimensional submanifold of $\mathbb{H}^{m}(-r) \subset \mathbb R^{m,1}$. Let $H(x)$ and  $H^L(x)$  be the mean curvature vectors of $M$ in $\mathbb{H}^{m}(-r)$ and  $\mathbb R^{m,1}$ respectively at $x \in M$. Then
	\begin{align*}
		H^L(x) = H(x) + \frac{n}{r}x.
	\end{align*}
\end{lem}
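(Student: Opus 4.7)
The plan is to obtain the formula by decomposing the ambient second fundamental form of $M$ in $\mathbb{R}^{m,1}$ through the intermediate hypersurface $\mathbb{H}^m(-r)$. Concretely, I will use the standard fact (the same one cited just before the lemma in the excerpt) that for the nested inclusion $M \subset \mathbb{H}^m(-r) \subset \mathbb{R}^{m,1}$, the second fundamental form of $M$ in $\mathbb{R}^{m,1}$ is the sum of the second fundamental form of $M$ in $\mathbb{H}^m(-r)$ and the second fundamental form of $\mathbb{H}^m(-r)$ in $\mathbb{R}^{m,1}$ evaluated on tangent vectors to $M$. Taking traces with respect to an orthonormal frame on $M$ will then give the desired relation.

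First I would identify the normal direction of $\mathbb{H}^m(-r)$ in $\mathbb{R}^{m,1}$ at $x$. Differentiating the defining equation $\langle x, x\rangle = -r$ along any curve in $\mathbb{H}^m(-r)$ gives $\langle v, x\rangle = 0$ for every $v \in T_x \mathbb{H}^m(-r)$, so the position vector $x$ spans the (timelike) normal line with $\langle x,x\rangle = -r$. Next I would compute the second fundamental form $\mathrm{II}^{\mathbb{H}}$ of $\mathbb{H}^m(-r)$ in $\mathbb{R}^{m,1}$. Let $\overline{\nabla}$ be the flat Lorentzian connection on $\mathbb{R}^{m,1}$. For tangent vector fields $V, W$ on $\mathbb{H}^m(-r)$, writing $\overline{\nabla}_V W = \nabla^{\mathbb{H}}_V W + c\, x$ and pairing with $x$, the identity $\overline{\nabla}_V x = V$ together with $\langle W, x\rangle \equiv 0$ yields
\begin{equation*}
c \langle x, x\rangle \;=\; \langle \overline{\nabla}_V W, x\rangle \;=\; -\langle W, V\rangle,
\end{equation*}
hence $c = \langle V,W\rangle/r$ and $\mathrm{II}^{\mathbb{H}}(V,W) = \frac{\langle V,W\rangle}{r}\, x$.

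Finally I would pick a local orthonormal frame $\{e_1, \ldots, e_n\}$ for $TM$ (for the induced Riemannian metric) and apply the decomposition
\begin{equation*}
\mathrm{II}^{L}(e_i, e_i) \;=\; \mathrm{II}^M(e_i, e_i) \;+\; \mathrm{II}^{\mathbb{H}}(e_i, e_i)
\;=\; \mathrm{II}^M(e_i, e_i) \;+\; \frac{1}{r}\, x,
\end{equation*}
where $\mathrm{II}^M$ and $\mathrm{II}^L$ denote the second fundamental forms of $M$ in $\mathbb{H}^m(-r)$ and in $\mathbb{R}^{m,1}$ respectively. Summing over $i = 1, \ldots, n$ and noting that $\sum_i \langle e_i, e_i\rangle = n$ since the frame is orthonormal, I get $H^L(x) = H(x) + \frac{n}{r}x$.

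There is no real obstacle here; the only points requiring mild care are (i) the sign coming from $\langle x, x\rangle = -r$ being negative in the Lorentzian inner product, which is exactly what produces the $+n/r$ (rather than $-n/r$) in the final formula, and (ii) making sure that when projecting to the normal bundle of $M$ in $\mathbb{R}^{m,1}$ the vector $x$ is indeed normal to $M$ as well, which is automatic because $M \subset \mathbb{H}^m(-r)$ and $x \perp T_x \mathbb{H}^m(-r) \supset T_x M$.
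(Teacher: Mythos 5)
Your proof is correct and follows essentially the same route as the paper: both decompose through the intermediate hypersurface $\mathbb{H}^m(-r)$, identify the position vector as its (timelike) normal, and use $\overline{\nabla}_V x = V$ to see that $\mathbb{H}^m(-r)$ is umbilic in $\mathbb{R}^{m,1}$ with $\mathrm{II}^{\mathbb{H}}(V,W)=\frac{\langle V,W\rangle}{r}x$. The paper phrases the same computation via the shape operator of $\eta = x/\sqrt{r}$ and the sign $\langle\eta,\eta\rangle=-1$, which is the dual Weingarten form of your Gauss-formula argument.
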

\noindent
{\bf Proof}:
Let $T_x \mathbb{H}^{m}(-r)$ and $\nu_x \mathbb{H}^{m}(-r)$ be the tangent and normal spaces of $\mathbb{H}^{m}(-r)$ at $x$ respectively.
Since the orthogonal projection of $H^L(x)$ to $T_x \mathbb{H}^{m}(-r)$ is $H(x)$, we only need to show that the orthogonal
projection of  $H^L(x)$ to $\nu_x \mathbb{H}^{m}(-r)$  is $\frac{n}{r}x$.

Note that $\nu_x \mathbb{H}^{m}(-r)$ is spanned by
$\eta(x):=\frac{x}{\sqrt{r}}$.
Let $\nabla^{L} $ be the Levi-Civita connection on  $\mathbb R^{m,1}$.
Then $\nabla^{L}_v \eta = \frac{v}{\sqrt{r}}$ for any tangent vector $v$.
Let $\{e_1,\cdots,e_n\}$ be an orthonormal basis of $T_x M$ and $A^{L}$ the shape operator
of $M$ as a submanifold in $\mathbb R^{m,1}$. Then
\begin{align*}
		 	\left<H^{L}(x), \eta(x)\right>
		 	      &=\sum_{i=1}^n \left<A^{L}_{\eta(x)}e_i,e_i\right>
		 	       =-\sum_{i=1}^n\left<\nabla^L_{e_i}\eta, e_i\right>
		 	       =-\sum_{i=1}^n \frac{1}{\sqrt{r}} \left<e_i,e_i\right>
		 	       =-\frac{n}{\sqrt{r}}.
\end{align*}
Since $\left< \eta(x), \eta(x) \right> = -1$, the orthogonal projection of $H^{L}(x)$ to
$\nu_x \mathbb{H}^{m}(-r)$ is
		 \begin{align*}
			- \left<H^{L}(x), \eta(x)\right>\eta(x)
			    &= \frac{n}{r}x.
		\end{align*}
The lemma is thus proved. $\Box$

The relation between hyperbolic MCF and Lorentzian MCF is given by the following:
\begin{lem} \label{1.2}
		Let  $f(\cdot,t)$ and $F(\cdot,t)$ be respectively the hyperbolic MCF and Lorentzian MCF of an $n$-dimensional submanifold
$M$ in $\mathbb{H}^{m}(-r) \subset \mathbb R^{m,1}$.  Then for any $x \in M$,
		\begin{align}
			F(x,t)=\sqrt{1+\frac{2nt}{r}}f(x,\frac{r}{2n}\ln(1+\frac{2nt}{r})) \label{7.1.4}
		\end{align}
if both sides of this equation are well defined.
\end{lem}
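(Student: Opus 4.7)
The plan is to verify directly that the right-hand side of equation \eqref{7.1.4}, viewed as a map from $M\times I$ into $\mathbb{R}^{m,1}$, satisfies the Lorentzian MCF equation \eqref{1.16} with the correct initial data, and then invoke uniqueness of solutions. Concretely, set
\[
\phi(t) := \sqrt{1+\tfrac{2nt}{r}}, \qquad s(t) := \tfrac{r}{2n}\ln\!\bigl(1+\tfrac{2nt}{r}\bigr),
\]
and define $\tilde F(x,t) := \phi(t)\,f(x,s(t))$ on the region where $1+2nt/r>0$ and $s(t)$ lies in the domain of $f$. At $t=0$ one has $\phi(0)=1$ and $s(0)=0$, so $\tilde F(\cdot,0)=f(\cdot,0)$ agrees with the given immersion of $M$ into $\mathbb{R}^{m,1}$, which is also the initial data for $F$.

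Next, the chain rule, together with the identities $\phi'(t)=\tfrac{n}{r\phi(t)}$ and $s'(t)=\tfrac{1}{\phi(t)^{2}}$, gives
\[
\frac{\partial \tilde F}{\partial t}(x,t) \; = \; \frac{n}{r\phi(t)}\,f(x,s(t)) \;+\; \frac{1}{\phi(t)}\,\frac{\partial f}{\partial s}(x,s(t)),
\]
and $\partial_s f(x,s(t))=H(x,s(t))$ since $f$ is the hyperbolic MCF. It remains to identify this expression with the Lorentzian mean curvature vector $H^{L}_{\tilde F(\cdot,t)}$ of the submanifold $\tilde F(M,t)\subset\mathbb{R}^{m,1}$ at the point $\tilde F(x,t)$.

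For that, I would use two ingredients. First, $\tilde F(M,t)=\phi(t)\cdot f(M,s(t))$ is obtained from $f(M,s(t))$ by dilating about the origin of $\mathbb{R}^{m,1}$ by the factor $\phi(t)$. A short computation with the second fundamental form (using a curve $\alpha$ on $f(M,s(t))$ and its rescaled-reparametrized image $\beta(u)=\phi(t)\alpha(u/\phi(t))$, which has the same unit tangent vector at the corresponding point) shows that under such a dilation the mean curvature vector at corresponding points scales by $1/\phi(t)$:
\[
H^{L}_{\tilde F(\cdot,t)}\bigl(\tilde F(x,t)\bigr) \;=\; \frac{1}{\phi(t)}\,H^{L}_{f(\cdot,s(t))}\bigl(f(x,s(t))\bigr).
\]
Second, since $f(M,s(t))\subset \mathbb{H}^{m}(-r)$, Lemma~\ref{4.1} gives
\[
H^{L}_{f(\cdot,s(t))}\bigl(f(x,s(t))\bigr) \;=\; H(x,s(t)) + \tfrac{n}{r}\,f(x,s(t)).
\]
Combining the two displays shows $\partial_t\tilde F(x,t)=H^{L}_{\tilde F(\cdot,t)}(\tilde F(x,t))$, so $\tilde F$ solves the Lorentzian MCF with the same initial data as $F$. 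Uniqueness of solutions to MCF then forces $F=\tilde F$ on the common interval of existence.

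The only real obstacle is the scaling identity for $H^{L}$. One has to be careful that tangent and normal subspaces (viewed inside $\mathbb{R}^{m,1}$ via parallel translation) are unchanged under the dilation, that the induced metric is scaled by $\phi(t)^{2}$, and that the Riemannian (spacelike) assumption on $M$ makes the orthonormal-frame calculation for $H^{L}$ go through with the correct sign, exactly as in the Euclidean case. All other steps are routine chain-rule bookkeeping.
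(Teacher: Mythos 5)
Your proposal is correct and follows essentially the same route as the paper: differentiate the ansatz $\phi(t)f(x,s(t))$ in $t$, identify the result with the Lorentzian mean curvature vector via Lemma \ref{4.1}, and conclude by uniqueness of solutions to MCF. The only difference is presentational — you make explicit the dilation-scaling of the mean curvature vector (by $1/\phi(t)$), which the paper leaves implicit in its appeal to Lemma \ref{4.1}; this is a harmless and arguably helpful elaboration, not a different argument.
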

\noindent
{\bf Proof}:
Let $s(t)=\frac{r}{2n}\ln(1+\frac{2nt}{r})$. The function on the right hand side of equation \eqref{7.1.4}
can be written as
		\begin{align*}
			G(x,t)=\sqrt{1+\frac{2nt}{r}}f(x,s(t)). 
		\end{align*}
Let $H(\cdot,s)$ be the mean curvature vector field of $f(\cdot,s)$ in $\mathbb{H}^{m}(-r)$. Then
$\frac{\partial}{\partial s} f(x,s) = H(x, s)$ since $f(x, s)$ satisfies the hyperbolic MCF equation.
By the chain rule, we have
\begin{align*}
			\frac{\partial}{\partial t} G(x,t)
			&=\frac{1}{\sqrt{1+\frac{2nt}{r}}} \left\{ H(x, s(t)) +\frac{n}{r}f(x,s(t)) \right\},
\end{align*}
which is exactly the mean curvature vector of $G(\cdot , t)$ as a submanifold in $\mathbb R^{m,1} $  at point $G(x, t)$ by Lemma \ref{4.1}. Moreover, $G(x, 0) = f(x,0) = F(x, 0)$. By the uniqueness of the solution to MCF equation,
we have $G(x, t) = F(x, t)$. This finishes the proof of the lemma.
$\Box$
	
An immediate consequence of this lemma is the following
\begin{cor} \label{cor:fF}
If  $f(.,t)$ exists over an interval  $(-\infty, T)$ for some $T\geq 0$, then $F(.,t)$ exists for all
$t\in (-\frac{r}{2n},\frac{r}{2n}(e^{\frac{2nT}{r}}-1))$.
On the other hand, if $F(.,t)$ exists over an interval $(-\frac{r}{2n},T')$ for some $T'\geq 0$, then
$f(.,t)$ exists for $t\in (-\infty, \frac{r}{2n}\ln(1+\frac{2nT'}{r}))$ and it is given by
\begin{align}
		f(x,t)=e^{-\frac{nt}{r}}F(x,\frac{r}{2n}(e^{\frac{2nt}{r}}-1)). \label{4.18}
\end{align}
\end{cor}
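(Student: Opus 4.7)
The plan is to treat this as a straightforward translation of the time-reparametrization identity \eqref{7.1.4} from Lemma~\ref{1.2} into statements about existence intervals, followed by solving for $f$ in terms of $F$ to obtain \eqref{4.18}.

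First I would record the fundamental bijection underlying \eqref{7.1.4}: the function $s(t) = \frac{r}{2n}\ln(1+\frac{2nt}{r})$ is a smooth, strictly increasing diffeomorphism from $(-\frac{r}{2n}, \infty)$ onto $(-\infty, \infty)$, with inverse $t(s) = \frac{r}{2n}(e^{2ns/r}-1)$. Since the square root factor $\sqrt{1+\frac{2nt}{r}}$ in \eqref{7.1.4} is finite and positive precisely on $t \in (-\frac{r}{2n}, \infty)$, the right hand side of \eqref{7.1.4} is well defined at $(x,t)$ if and only if $t > -\frac{r}{2n}$ and $f(x, s(t))$ is defined.

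For the first assertion, suppose $f(\cdot, s)$ exists for all $s \in (-\infty, T)$. Then $s(t) < T$ is equivalent to $t < \frac{r}{2n}(e^{2nT/r}-1)$, so the right hand side of \eqref{7.1.4} is well defined on the interval
\[ \left(-\tfrac{r}{2n},\, \tfrac{r}{2n}(e^{2nT/r}-1)\right). \]
By Lemma~\ref{1.2}, $F(\cdot, t)$ agrees with this expression on the portion of this interval where $F$ is a priori defined; by smoothness of the right hand side and the uniqueness of the Lorentzian MCF with initial condition $F(\cdot, 0) = f(\cdot, 0)$, the flow $F$ extends to the entire interval above.

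For the second assertion, I would start from \eqref{7.1.4} and solve for $f$. Setting $s = s(t)$ so that $1+\frac{2nt}{r} = e^{2ns/r}$, the identity becomes
\[ f(x, s) \, = \, e^{-ns/r} \, F\!\left(x,\, \tfrac{r}{2n}\bigl(e^{2ns/r}-1\bigr)\right), \]
which is exactly \eqref{4.18} after relabeling $s$ as $t$. If $F(\cdot, t)$ exists on $(-\frac{r}{2n}, T')$, then the condition $\frac{r}{2n}(e^{2ns/r}-1) < T'$ translates to $s < \frac{r}{2n}\ln(1 + \frac{2nT'}{r})$, so the right hand side above is well defined (and smooth in $s$) on $(-\infty, \frac{r}{2n}\ln(1+\frac{2nT'}{r}))$. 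Applying Lemma~\ref{1.2} again in reverse—i.e., verifying by the chain rule that this expression satisfies the hyperbolic MCF equation with the correct initial datum at $s=0$—and invoking uniqueness yields the claim.

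No serious obstacle is expected: the argument is purely a reparametrization of time together with the uniqueness of solutions to MCF. The only point to be careful about is matching the initial condition at $t=0$ (which corresponds to $s=0$) and confirming that the right hand sides of \eqref{7.1.4} and \eqref{4.18}, viewed as candidate flows, satisfy the respective MCF equations—both of which are consequences of Lemma~\ref{1.2} and the chain rule computation already performed in its proof.
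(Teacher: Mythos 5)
Your argument is correct and is exactly the route the paper intends: the paper states this corollary as an immediate consequence of Lemma~\ref{1.2}, obtained by noting that $s(t)=\frac{r}{2n}\ln(1+\frac{2nt}{r})$ is a diffeomorphism from $(-\frac{r}{2n},\infty)$ onto $\mathbb{R}$, inverting \eqref{7.1.4}, and using uniqueness of MCF solutions. Your write-up simply makes explicit the interval bookkeeping and the uniqueness/extension step that the paper leaves unstated.
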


\begin{rmk} \label{rem:fF}
Since the image of $f(\cdot, t)$ always lies in the hyperbolic space $\mathbb{H}^{m}(- r)$,
for equations \eqref{7.1.4} and \eqref{4.18} to hold, the image of $F(\cdot, t)$ must also lie in a
hyperbolic space $\mathbb{H}^{m}(d)$ for some $d<0$. In particular, it must lie in the time cone
\begin{align} \label{eqn:timecone}
			\mathcal{TC} :=\{v\in  \mathbb R^{m,1}\mid \left<v,v\right><0\}.
\end{align}
Moreover, it is possible that $F(\cdot,t)$ may exist for $t\leq -\frac{r}{2n}$. In this case,
there is no hyperbolic MCF corresponds to $F(\cdot,t)$ for $t\leq -\frac{r}{2n}$.
For example,
let $P=\{ (x_1, x_2, 2) \mid x_1, x_2 \in \mathbb{R} \}  \subset \mathbb{R}^{2,1}$ and $M=P \bigcap \mathbb{H}^2(-1)$.
Then $P$ is a plane with induced Euclidean metric and $M$ is a circle in $P$.
The Lorentzian MCF $F(\cdot, t)$ of $M$ exists for all $t \in (-\infty, T)$ for some $T>0$, and it gives
a foliation of $P \setminus \{(0,0,2)\}$ by circles centered at $(0,0,2)$.
But only those circles inside the time cone $\mathcal{TC}$ correspond to
hyperbolic MCF $f(\cdot, t)$ through equation \eqref{7.1.4}. The circles outside the time cone
do not correspond to $f(\cdot, t)$ for any $t$. Note that for this example,
$f(\cdot, t)$ also exists for all $t \in (-\infty, T')$ for some $T'>0$
and the image of $f(\cdot, t)$ is also a circle  for each $t$
(possibly lies in different plane).
\end{rmk}

Since minimal submanifolds do not move under MCF, another immediate consequence of Lemma \ref{1.2} is the following:	
\begin{cor} \label{2.6}
		If $M$ is an $n$-dimensional minimal submanifold in $\mathbb{H}^{m}(-r)$ with inclusion map $f_0$,
then MCF of $M$ in $\mathbb{R}^{m,1}$ is given by
		\begin{align*}
			F(x,t)=\sqrt{1+\frac{2nt}{r}}f_0(x), \,\,\, {\rm for} \,\,\, t \in (- \frac{r}{2n}, \, \infty), x \in M.
		\end{align*}
\end{cor}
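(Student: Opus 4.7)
The plan is to deduce this directly from Lemma \ref{1.2} and Corollary \ref{cor:fF}, which reduce the assertion to an essentially trivial computation once we observe that minimality makes the hyperbolic MCF stationary.

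First I would note that since $M$ is minimal in $\mathbb{H}^{m}(-r)$, the mean curvature vector field $H(\cdot, s)$ of the hyperbolic MCF is identically zero at $s=0$, and hence the constant map $f(x, s) = f_0(x)$ for all $s \in \mathbb{R}$ is a solution of the hyperbolic MCF equation \eqref{1.16}. By uniqueness of solutions, this is the hyperbolic MCF of $M$, and in particular it exists on the maximal interval $(-\infty, T)$ with $T = \infty$.

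Next I would apply Lemma \ref{1.2}. Substituting $f(x, s(t)) = f_0(x)$ into equation \eqref{7.1.4} yields
\[ F(x,t) = \sqrt{1+\tfrac{2nt}{r}} \, f_0(x) \]
for every $t$ at which both sides are defined. Corollary \ref{cor:fF} applied with $T = \infty$ then guarantees that $F(\cdot, t)$ exists for all $t \in (-\frac{r}{2n}, \infty)$, which is the claimed interval.

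Since this reduction is immediate, there is essentially no obstacle; the only subtlety worth pointing out is the requirement in Remark \ref{rem:fF} that the image of $F(\cdot, t)$ lie in the time cone $\mathcal{TC}$ in order for equation \eqref{7.1.4} to be meaningful. This is automatic here because $\langle F(x,t), F(x,t)\rangle = (1+\frac{2nt}{r}) \langle f_0(x), f_0(x)\rangle = -(r+2nt) < 0$ for $t > -\frac{r}{2n}$, so $F(x,t) \in \mathcal{TC}$ (in fact $F(x,t) \in \mathbb{H}^m(-(r+2nt))$) throughout the interval. As a sanity check, one may also verify the formula directly: compute $\partial_t F = \frac{n}{r+2nt} F(x,t)$, note that $F(\cdot, t)$ is a homothetic image of a minimal submanifold of $\mathbb{H}^m(-r)$ and hence is minimal in $\mathbb{H}^m(-(r+2nt))$, and apply Lemma \ref{4.1} with $r$ replaced by $r+2nt$ to obtain $H^L(F(x,t)) = \frac{n}{r+2nt} F(x,t)$, matching $\partial_t F$; uniqueness of the Lorentzian MCF then concludes the proof.
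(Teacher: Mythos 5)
Your proposal is correct and follows the paper's own route: the paper derives this corollary as an immediate consequence of Lemma \ref{1.2} together with the observation that minimal submanifolds do not move under MCF, which is exactly your first two steps. The additional time-cone check and the direct verification via Lemma \ref{4.1} are sound but not needed for the paper's one-line argument.
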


\begin{rmk} \label{1.11}
   As observed in \cite{X.C}, the Euclidean MCF of minimal submanifolds in the sphere always have ancient solutions.
   But for minimal submanifolds in hyperbolic spaces, their Lorentzian MCF may not have ancient solutions, as demonstrated in the above corollary.
\end{rmk}

Lemma \ref{1.2} allows us to study hyperbolic MCF via the study of Lorentzian MCF.
In particular, we can use this relation to obtain the following
\begin{prop}
Let $M$ be an isoparametric submanifold in $\mathbb{H}^{m}(-r)\subset \mathbb R^{m,1}$, $F(\cdot, t)$ and $f(\cdot, t)$
the Lorentzian and hyperbolic MCF of $M$ respectively.
Then $F(\cdot, t)$ and $f(\cdot, t)$ are isoparametric whenever they exist.
\end{prop}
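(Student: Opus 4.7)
My plan is to reduce the proposition to a statement about the Lorentzian MCF $F$, construct an explicit parallel-submanifold ansatz for $F$, and conclude by the uniqueness of MCF. First, by Lemma \ref{1.15}, a submanifold of $\mathbb{H}^{m}(-r)$ is isoparametric there if and only if it is isoparametric in $\mathbb{R}^{m,1}$; and by equation \eqref{4.18}, each hyperbolic slice $f(\cdot,t)$ is the nonzero homothety $e^{-nt/r}\,F\bigl(\cdot,\tfrac{r}{2n}(e^{2nt/r}-1)\bigr)$ of a Lorentzian slice. Homotheties of $\mathbb{R}^{m,1}$ preserve the normal bundle and rescale all principal curvatures by a common factor, so the isoparametric property in $\mathbb{R}^{m,1}$ is homothety-invariant. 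Hence it suffices to prove that $F(\cdot,t)$ is isoparametric in $\mathbb{R}^{m,1}$ throughout its existence interval, after which Lemma \ref{1.15} delivers the statement for $f$.

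Since $\nu M$ is globally flat in $\mathbb{R}^{m,1}$, one can choose a global parallel pseudo-orthonormal frame $\eta_1,\ldots,\eta_k$ of $\nu M$ with $\langle\eta_i,\eta_j\rangle=\epsilon_i\delta_{ij}$, $\epsilon_i\in\{\pm1\}$. Writing $A_i$ for the shape operator along $\eta_i$, the isoparametric hypothesis says each $A_i$ has constant eigenvalues, and flatness of $\nu M$ is equivalent to pairwise commutativity of the $A_i$, so they are simultaneously diagonalizable on a fixed decomposition of $TM$ into constant curvature distributions. I would try the ansatz
\[
G(x,t) \;:=\; x+\sum_{i=1}^k \alpha_i(t)\,\eta_i(x),\qquad \alpha_i(0)=0.
\]
As long as $dG_t = I - \sum_i \alpha_i(t) A_i$ is invertible, each slice $M_t:=G(M,t)$ is an immersed Riemannian submanifold, the $\eta_i$ remain a parallel frame of $\nu M_t$, and a direct calculation yields the parallel-submanifold identity
\[
A^{M_t}_{\eta_j} \;=\; A_j\Bigl(I-\sum_{i}\alpha_i(t)A_i\Bigr)^{-1}.
\]
The right-hand side is a rational expression in pairwise-commuting diagonalizable operators with constant eigenvalues, so $A^{M_t}_{\eta_j}$ again has constant eigenvalues, and $M_t$ is isoparametric in $\mathbb{R}^{m,1}$ with mean curvature vector $H^{M_t}=\sum_j\epsilon_j\,\mathrm{tr}(A^{M_t}_{\eta_j})\,\eta_j$. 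The MCF equation $\partial_t G=H^{M_t}$ then reduces to the autonomous, locally Lipschitz ODE system
\[
\alpha_j'(t) \;=\; \epsilon_j\,\mathrm{tr}\Bigl(A_j\bigl(I-\sum_{i}\alpha_i A_i\bigr)^{-1}\Bigr),\qquad \alpha_j(0)=0,
\]
which has a unique local solution. By uniqueness of Lorentzian MCF this forces $F=G$ on a neighborhood of $0$, so $F(\cdot,t)$ is isoparametric for small $t$.

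To propagate the property across the maximal existence interval $(T_-,T_+)$, the set of $t\in(T_-,T_+)$ on which $F(\cdot,t)$ is isoparametric is open---since the ansatz above can be re-centered at any interior time---and closed, since flatness of the normal bundle and constancy of principal curvatures along a continuously varying parallel normal frame are preserved under limits. Hence the set equals $(T_-,T_+)$, and the reduction in the first paragraph yields the statement for $f(\cdot,t)$ as well. The point that I expect to require the most care is the Lorentz signature: some $\eta_j$ are timelike (exactly one, corresponding to the outward normal to $\mathbb{H}^m(-r)$, once the ambient decomposition is traced through), so the signs $\epsilon_j$ must be tracked carefully in the formula for $H^{M_t}$ and in the ODE system, since otherwise the ansatz would solve a sign-flipped equation rather than the genuine Lorentzian MCF.
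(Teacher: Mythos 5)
Your proposal is correct and follows essentially the same route as the paper: the paper reduces to the Lorentzian flow via Lemma \ref{1.15} and Lemma \ref{1.2} and then simply cites the proof of Proposition 2.1 in \cite{X.CT} ``with a small modification for the sign,'' and your parallel-family ansatz $G(x,t)=x+\sum_i\alpha_i(t)\eta_i(x)$, the shape-operator identity for parallel submanifolds, the reduction of MCF to an ODE for the $\alpha_i$, and the appeal to uniqueness is precisely that cited argument, with the signature signs $\epsilon_j$ tracked as the paper indicates. The only difference is that you write out in full (including the open--closed propagation across the existence interval) what the paper delegates to the references.
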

\noindent
{\bf Proof}:
This proposition is the Lorentzian analog of Proposition 2.1 in \cite{X.CT}.
If $M$ is an isoparametric submanifold in a hyperbolic space $\mathbb{H}^m(-r)$, it is also isoparametric
in Lorentz space $\mathbb{R}^{m,1}$ by Lemma \ref{1.15}. As proved in \cite{B.W}, such submanifolds have
the structure of curvature distributions and curvature normals similar to those of isoparametric submanifolds
in Euclidean spaces obtained in \cite{T.C}. To show that $F(\cdot, t)$ is an isoparametric submanifold parallel
to $M$, we can use
the same proof as that of Proposition 2.1 in \cite{X.CT} with possibly a small modification for the sign due to the fact that $\left< x, x \right> < 0$ for $x \in \mathbb{H}^m(-r)$.
By Lemma \ref{1.2}, $f(\cdot, t)$ is also isoparametric.
$\Box$

\begin{rmk}	
The proof of Theorem 2.2 in \cite{X.CT} also works for Lorentzian MCF $F(\cdot, t)$ of an $n$-dimensional isoparametric
submanifold $M$ in a hyperbolic space. Hence for any $x \in M$, we have
	\begin{align}
				\left<F(x,t), F(x, t) \right> = \left<x, x \right> - 2nt \label{3.2}
	\end{align}
whenever $F(x, t)$ exists. This implies that for $F(x, t)$ staying in the time cone $\mathcal{TC}$, $t$ can not be smaller than $\frac{\left<x, x \right>}{2n}$. If $t \leq \frac{\left<x, x \right>}{2n}$, even if $F(\cdot, t)$ exists, it does not correspond to a hyperbolic MCF via equation \eqref{7.1.4}.
\end{rmk}

\section{Proof of Theorem \ref{1.17}} \label{1.6}	

In this section, we give a proof for Theorem \ref{1.17} based on Theorem \ref{thm:Decomp} and results for MCF
of isoparametric submanifolds in Euclidean spaces and spheres obtained in \cite{X.CT} and \cite{X.C}.
During the proof, we also give a recursive
construction for explicit solutions of MCF of isoparametric submanifolds in hyperbolic spaces.

We first consider the case when $M$ is full in $ \mathbb{H}^{m}(-1)$, i.e. $M$ is not contained in any umbilical hypersurface
in $\mathbb{H}^{m}(-1)$.
\begin{prop} \label{1.9}
		Let  $M$ be a full isoparametric submanifold in $\mathbb{H}^{m}(-1)$
and $f(\cdot,t)$ the solution to hyperbolic MCF of $M$.
  Then  $f(\cdot,t)$ is an ancient solution.
  Moreover, if $M \neq \mathbb{H}^{m}(-1)$,
   then there exists a finite $T > 0$ such that the maximal existence interval for $f(\cdot,t)$ is $(-\infty, T)$, and $f(\cdot,t)$ converges to a focal submanifold  of $M$ as $t\rightarrow T$.
   Moreover, as $t\rightarrow -\infty$, $f(\cdot,t)$ converges to a smooth submanifold $\widetilde M$ with flat normal bundle in the ideal boundary of $\mathbb{H}^{m}(-1)$ and $\widetilde M$ is diffeomorphic to $M$.
\end{prop}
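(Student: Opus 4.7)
The plan is to exploit the Wu decomposition of Theorem~\ref{thm:Decomp} together with the correspondence between hyperbolic and Lorentzian MCF from Lemma~\ref{1.2}. Writing $M = \mathbb{H}^{l}(-r) \times M'$ with $\mathbb{H}^{l}(-r) \subset V$ in a Lorentz subspace of dimension $l+1$ and $M' \subset S^{m-l-1}(r-1) \subset V^{\perp}$ an isoparametric submanifold of the Euclidean space $V^{\perp}$, the hypotheses that $M$ is full and $M \neq \mathbb{H}^{m}(-1)$ force $1 \leq l < m$, $r > 1$, and $n' := \dim M' \geq 1$. Since the ambient Lorentz metric splits orthogonally as $V \oplus V^{\perp}$ and mean curvatures of a product split as a sum, the Lorentzian MCF of $M$ splits as $F(x,y,t) = (F_1(x,t), F_2(y,t))$. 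Lemma~\ref{4.1} gives the explicit scaling $F_1(x,t) = \sqrt{1 + 2lt/r}\,x$, defined for $t > -r/(2l)$, while $F_2$ is the Euclidean MCF of $M' \subset V^{\perp}$.

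By the results of \cite{X.CT} and \cite{X.C}, $F_2$ exists on $(-\infty, T_2)$ with $T_2 \in (0, \infty)$ (since $M'$ is compact and positive-dimensional, hence non-minimal in Euclidean space) and collapses to a focal submanifold of $M'$ in $V^{\perp}$ as $t \to T_2$. Hence $F(\cdot,t)$ exists on $(-r/(2l), T_2)$. Applying Corollary~\ref{cor:fF}, $f(\cdot, s) = e^{-ns} F(\cdot, (e^{2ns}-1)/(2n))$ is defined whenever $(e^{2ns}-1)/(2n) \in (-r/(2l), T_2)$; the lower bound reduces to $e^{2ns} > 1 - nr/l$, which holds for all $s \in \mathbb{R}$ since $nr > l$. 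Thus $f$ is ancient with maximal existence interval $(-\infty, T)$ where $T = (1/(2n)) \ln(1 + 2nT_2) < \infty$, and as $s \to T^{-}$, $f(\cdot, s)$ tends to $e^{-nT} (\mathbb{H}^{l}(-(r + 2lT_2)) \times F_2(M', T_2))$, a focal submanifold of $M$ in $\mathbb{H}^{m}(-1)$.

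As $s \to -\infty$, $(e^{2ns}-1)/(2n) \to -1/(2n)$, and equation~\eqref{3.2} yields $\langle F(\cdot,-1/(2n)), F(\cdot,-1/(2n)) \rangle = 0$, so the Lorentzian limit lies on the null cone; since $e^{-ns} \to \infty$, $f(\cdot, s)$ escapes to the ideal boundary. Choose an orthonormal basis $\epsilon$ adapted to the splitting ($\epsilon_{m+1}$ the timelike unit vector in $V$, $\epsilon_1, \ldots, \epsilon_l$ spacelike in $V$, $\epsilon_{l+1}, \ldots, \epsilon_m$ an orthonormal basis of $V^{\perp}$). With $c := \sqrt{(nr-l)/(nr)}$, Lemma~\ref{lem:convBD} identifies the limit as
\[
\widetilde M = \left\{ \left( \frac{a_1(x)}{a_{m+1}(x)}, \ldots, \frac{a_l(x)}{a_{m+1}(x)}, \frac{F_2(y,-1/(2n))}{c\, a_{m+1}(x)} \right) \colon x \in \mathbb{H}^{l}(-r),\ y \in M' \right\} \subset S^{m-1},
\]
and the unit-norm condition is verified using $\sum_{i=1}^l a_i(x)^2 = a_{m+1}(x)^2 - r$ together with $\|F_2(y,-1/(2n))\|^2 = r - l/n$, both consequences of \eqref{3.2}.

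The displayed map from $M$ onto $\widetilde M$ is injective: the first $l$ coordinates realize the projective embedding of $\mathbb{H}^{l}(-r)$ onto $\mathbb{B}^{l}$, and for each fixed $x$ the map $y \mapsto F_2(y,-1/(2n))$ is a diffeomorphism onto the parallel submanifold of $M'$, so $\widetilde M$ is a smooth submanifold diffeomorphic to $M$. The main obstacle I anticipate is verifying flatness of the normal bundle of $\widetilde M$ in $S^{m-1}$. The cleanest approach is to transfer the question to the Poincar\'e ball $\mathbb{B}^{m}$ via $\psi$: each $\psi(f(\cdot, s))$ has flat normal bundle with respect to the Euclidean metric on $\mathbb{B}^{m}$ by Lemma~\ref{flat}(1), and the explicit formulas above should yield smooth convergence $\psi(f(\cdot, s)) \to \widetilde M$ in $\mathbb{R}^{m}$ as $s \to -\infty$, transferring flatness to the Euclidean normal bundle of $\widetilde M$ in $\mathbb{R}^{m}$. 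Flatness then descends to $\nu_{S^{m-1}} \widetilde M$ because the radial position vector $x$ satisfies $\nabla^{\perp}_{X} x = X^{\perp} = 0$ for $X \in T_p \widetilde M$, so the radial line bundle is parallel in the normal connection and the normal curvature splits across the orthogonal decomposition $\nu_{\mathbb{R}^{m}} \widetilde M = \nu_{S^{m-1}} \widetilde M \oplus \mathbb{R} x$.
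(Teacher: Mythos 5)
Your proposal is correct and, for the bulk of the argument, is exactly the paper's proof: Wu's decomposition $M=\mathbb{H}^{l}(-r)\times M'$, the splitting of the Lorentzian MCF as $F=(F_1,F_2)$ with $F_1$ a homothety of the hyperboloid defined for $t>-r/(2l)$ and $F_2$ the Euclidean MCF of the compact (hence non-minimal) factor $M'$ with finite collapse time, and the transfer back to hyperbolic MCF via Corollary \ref{cor:fF}; your boundary limit also agrees with the paper's, since $F_2(y,-\tfrac{1}{2n})/c$ is precisely the paper's $\sqrt{r/(r-1)}\,f_2(y,q^*)$, a rescaled parallel isoparametric submanifold $M''$ of $M'$. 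The one point where you genuinely diverge is the flatness of the normal bundle of $\widetilde M$. The paper avoids any limiting argument: it writes $\widetilde M=\Phi(\mathbb{H}^l(-r)\times M'')$ for an explicitly constructed conformal diffeomorphism $\Phi:\mathbb{H}^l(-r)\times S^{m-l-1}(r)\to S^{m-1}$, notes that $\mathbb{H}^l(-r)\times M''$ has flat normal bundle in the product, and applies the conformal invariance of the normal curvature (Lemma \ref{flat}(1)); this simultaneously shows $\widetilde M$ is an embedded copy of $M$. Your route --- flat normal bundles of $\psi(f(\cdot,s))$ in the Euclidean ball, passage to the limit, then descent from $\nu_{\mathbb{R}^m}\widetilde M$ to $\nu_{S^{m-1}}\widetilde M$ using the parallel radial normal field --- also works, because by the Ricci equation flatness in a flat ambient space is the pointwise commutation of shape operators and so survives $C^2$ convergence of immersions onto an immersed limit; but you still owe the verification that the convergence is in fact $C^2$ (it is, since the $s$-dependence enters only through scalar factors and a smoothly varying time parameter in $f_2$), which you only assert "should" hold. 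The paper's static conformal-model computation is the cleaner way to close this step.
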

\noindent
{\bf Proof}:
		Since $M$ is full and isoparametric, by Theorem \ref{thm:Decomp}, there exists $r \geq 1$ and a Lorentzian subspace $V$ of $\mathbb R^{m,1}$ with dimension $l+1$, such that
\begin{align*}
			M=\mathbb{H}^l(-r) \times M',
\end{align*}
where $\mathbb{H}^l(-r)$ is one sheet of the hyperboloid
$\{x \in V \mid \left<x, x\right>=-r\} \subset V$ and
$M'$ is an isoparametric submanifold in the sphere $S^{m-l-1}(r-1)\subset V^{\perp} $.
		
Let $F_1(\cdot,t),F_2(\cdot,t)$ be the solutions to MCF of $\mathbb{H}^l(-r)$ in $V$ and MCF of $M'$ in  $V^{\perp}$ respectively.  Since $\mathbb{R}^{m,1}$ splits as the product $V \times V^\perp$, the MCF of $M$ in $\mathbb{R}^{m,1}$
is given by
		\begin{align}
			F((x,y),t)=(F_1(x,t),F_2(y,t)) \label{7.1.6}
		\end{align}
for $x \in \mathbb{H}^l(-r)$ and $y \in M'$.

By Corollary \ref{2.6}, $F_1(\cdot,t)$ exists for all
$t\in (-\frac{r}{2l}, \, \infty)$. Since $r \geq 1$ and $l \leq n$ where $n$ is the dimension of $M$,
we have  $-\frac{r}{2l} \leq -\frac{1}{2n}$. Hence $F_1(\cdot,t)$ exists for all $t\in (-\frac{1}{2n}, \, \infty)$.

If $M'$ is just a point, then $M$ is contained in a translation of $V$.
Since $M$ is full, we must have $V=\mathbb{R}^{m,1}$ and $M=\mathbb{H}^m(-r)$.
In this case $f(\cdot, t)$ exists for all $t \in \mathbb{R}$.

If $M'$ is not a point, then by Theorem 1.1 in \cite{X.CT} and Theorem 1.1 in \cite{X.C},  $F_2(\cdot,t)$ exists  for all $t\in (-\infty, T')$ for some finite $T' > 0$ and collapses to a focal submanifold of $M'$ when $t$ goes to $T'$.
Thus, $F(.,t)$ exists for $t\in (-\frac{1}{2n},T')$, and it collapses to a focal submanifold  when $t$ goes to $T'$. Let $T = \frac{1}{2n} \ln(1+2n T')$. Then
by Corollary~\ref{cor:fF}, the MCF $f(\cdot, t)$ of $M$ in $\mathbb{H}^{m}(-1)$ exists for
all $t \in (-\infty, T)$, and it collapses to a focal submanifold of $M$ when $t$ goes to $T$.

Now we consider the limit of $f(\cdot, t)$ as $t\rightarrow-\infty$.
Note that $V$ is a Lorentz space and $V^\perp$ is a Euclidean space. We can
choose an orthonormal basis  $\{\alpha_1,\cdots, \alpha_{l+1}\}$ of $V$ and
 an orthonormal basis $\{\beta_{1},\cdots,\beta_{m-l}\}$ of $V^{\perp}$
such that
\[  \left<\alpha_{i},\alpha_{i}\right>=\left<\beta_{j},\beta_{j}\right> = 1, \hspace{10pt}
\left<\alpha_{l+1},\alpha_{l+1}\right>=-1, \hspace{10pt}
\left<\alpha_{l+1}, (0, \cdots, 0, 1) \right> < 0
\]
for $1 \leq i \leq l$ and $1 \leq j \leq m-l$.
Let $\epsilon = \{ \alpha_1, \cdots, \alpha_l, \beta_1, \cdots, \beta_{m-l}, \alpha_{l+1} \}$.
Then $\epsilon$ is an orthonormal basis of $\mathbb{R}^{m,1}$ satisfying condition
\eqref{eqn:condbasis}.
Let $\Psi_\epsilon : \mathbb{H}^{m}(-1) \longrightarrow \mathbb{B}^m$ be
the map $\Psi_{\epsilon, 1}$ defined by equation \eqref{eqn:PsiEr}.

For every
\[  x= \sum_{i=1}^{l+1} x_i \alpha_i \in V, \hspace{20pt}
   y=\sum_{j=1}^{m-l} y_j \beta_j \in V^\perp,\]
let
\[ \overline{x}:=(x_1, \cdots, x_{l}), \hspace{20pt} \overline{y}=(y_1, \cdots, y_{m-l}).\]
 Then
$\|\overline{x}\|^2  + \|\overline{y}\|^2 = x_{l+1}^2 -1$ if $(x, y) \in \mathbb{H}^{m}(-1)$ and
\begin{equation} \label{eqn:psiV}
\Psi_\epsilon (x, y) = \frac{1}{1+x_{l+1}} (\overline{x}, \,\, \overline{y}).
\end{equation}

By Corollary \ref{2.6} and equation (1.1) in \cite{X.C}, MCF of $\mathbb{H}^l(-r)$ in $V$
and MCF of $M'$ in $V^\perp$ can be written as
\[ F_1(x,t)=a_1(t) x, \hspace{10pt} F_2(y,t)= a_2(t) f_2(y,q(t)) \]
for $x \in \mathbb{H}^l(-r) \subset V$ and $y \in M' \subset V^\perp$,
 where
\[ a_1 (t) := \sqrt{1+\frac{2lt}{r}}, \hspace{10pt}
 a_2 (t) := \sqrt{1-\frac{2(n-l)}{r-1}t},  \hspace{10pt}
 q(t):=-\frac{r-1}{2(n-l)}\ln \left( 1-\frac{2(n-l)}{r-1}t \right),
\]
and $f_2(.,t)$ is the MCF of $M'$ in the sphere $S^{m-l-1}(r-1) \subset V^\perp$.
By equations (\ref{4.18}) and \eqref{7.1.6}, MCF of $M$ in $\mathbb{H}^m(-1)$ is given by
\begin{align*}
	f((x,y),t)=e^{-nt}(a_1(w(t)) x, \,\, a_2(w(t)) f_2(y,q(w(t)))) \in V \times V^\perp,
\end{align*}
where
\[ w(t) := \frac{1}{2n}(e^{2nt}-1).\]
Hence
\[ \Psi_\epsilon (f((x,y),t))
   = b(x, t) \left( \overline{x}, \,\, \frac{a_2(w(t))}{a_1(w(t))} \,\, \overline{f_2(y,q(w(t)))} \right) ,\]
where
\[ b(x, t):= \frac{e^{-nt} a_1(w(t))}{1+ e^{-nt} a_1(w(t)) x_{l+1}}. \]
Since
\[ \lim_{t \rightarrow -\infty} w(t) = -\frac{1}{2n}, \hspace{10pt}
\lim_{t \rightarrow -\infty} a_1(w(t)) = \sqrt{1 - \frac{l}{nr}} > 0, \hspace{10pt}
\lim_{t \rightarrow -\infty} a_2(w(t)) =  \sqrt{1+\frac{n-l}{n(r-1)}},
\]
we have
\[ \lim_{t \rightarrow -\infty} b(x, t) = \frac{1}{x_{l+1}}, \hspace{20pt}
\lim_{t \rightarrow -\infty} \frac{a_2(w(t))}{a_1(w(t))} = \sqrt{\frac{r}{r-1}}, \]
\[ \lim_{t \rightarrow -\infty} q(w(t)) = q* := -\frac{r-1}{2(n-l)}\ln \left( 1+\frac{n-l}{n(r-1)} \right) \in (-\infty, 0).
\]
Since $x \in \mathbb{H}^l(-r)$ and $f_2(y,q*) \in S^{m-l-1}(r-1)$, we have
\begin{equation} \label{eqn:limitPsif}
 \lim_{t \rightarrow -\infty}  \Psi_\epsilon (f((x,y),t))
= \frac{1}{x_{l+1}} \left( \overline{x}, \,\,  \sqrt{\frac{r}{r-1}} \,\, \overline{f_2(y,q*)} \right) \in S^{m-1}.
\end{equation}
Hence $f(\cdot, t)$ converges to a subset in the ideal boundary of $\mathbb{H}^{m}(-1)$.
Note that
$\sqrt{\frac{r}{r-1}} f_2(y,q*)$ lies in the sphere $S^{m-l-1}(r)$.

By Theorem 1.1 in \cite{X.C}, $f_2(\cdot, t)$ exists for all $t \in (-\infty, 0)$ and the image of $f_2(\cdot, t)$
is an isoparametric submanifold of $S^{m-l-1}(r-1)$ parallel to $M'$. In particular, the image of $f_2(\cdot, t)$
is diffeomorphic to $M'$. Consequently
\[ M'' := \left\{ \left. \sqrt{\frac{r}{r-1}} f_2(y,q*) \right| y \in M' \right\} \]
is an isoparametric submanifold of $S^{m-l-1}(r) \subset V^\perp$ which is diffeomorphic to $M'$.
Let
$\Phi:\mathbb H^l(-r)\times S^{m-l-1}(r)\rightarrow S^{m-1}$ be the map defined by
\begin{align*}
	\Phi(x,z) := \frac{1}{x_{l+1}} (\overline{x}, \overline{z}),
\end{align*}
for $x \in \mathbb H^l(-r) \subset V$ and $z \in S^{m-l-1}(r) \subset V^\perp$.
Then $\Phi$ is a smooth injective map. Let $\Phi_*$ be the differential map of $\Phi$.
For any
$ (v, u) \in T_{(x, z)} H^l(-r)\times S^{m-l-1}(r) $
with $v \in T_x H^l(-r) \subset V$ and $u \in T_z S^{m-l-1}(r) \subset V^\perp$, we have
\[ \Phi_*(v, u) = \frac{1}{x_{l+1}} (\overline{v}, \overline{u}) - \frac{v_{l+1}}{x_{l+1}^2} (\overline{x}, \overline{z}). \]
Equip $S^{m-1}$ with the standard metric induced from Euclidean metric on $\mathbb{R}^m$.
We have
\begin{eqnarray*}
\| \Phi_*(v, u) \|^2
&=& \frac{1}{x_{l+1}^2} (\| \overline{v} \|^2 + \|\overline{u}\|^2) + \frac{v_{l+1}^2}{x_{l+1}^4} (\|\overline{x}\|^2 + \|\overline{z}\|^2)
    - 2 \frac{v_{l+1}}{x_{l+1}^3} (\left< \overline{v}, \overline{x} \right> + \left<\overline{u}, \overline{z} \right> ).
\end{eqnarray*}
Since
\[ \|\overline{x}\|^2 + \|\overline{z}\|^2 = x_{l+1}^2, \hspace{10pt}
\left< \overline{v}, \overline{x} \right> - v_{l+1} x_{l+1} = 0, \hspace{10pt}
\left<\overline{u}, \overline{z} \right> = 0, \]
we have
\begin{eqnarray*}
\| \Phi_*(v, u) \|^2
&=& \frac{1}{x_{l+1}^2} (\|\overline{v}\|^2 - v_{l+1}^2 + \|\overline{u}\|^2) = \frac{1}{x_{l+1}^2} \|(v, u) \|^2.
\end{eqnarray*}
Hence $\Phi$ is a conformal diffeomorphism  from $\mathbb H^l(-r)\times S^{m-l-1}(r)$ to
an open subset of $S^{m-1}$.
Since the normal bundle of $\mathbb H^l(-r)\times M''$ in $\mathbb H^l(-r)\times S^{m-l-1}(r)$ is flat,
by Lemma \ref{flat}, the normal bundle of $\widetilde{M}:= \Phi(\mathbb H^l(-r)\times M'')$
in $S^{m-1}$  is also flat.
By equation \eqref{eqn:limitPsif}, $\Psi_\epsilon(f(\cdot, t))$ converges to
$\widetilde{M}$ which is diffeomorphic to $M$.
The proposition is thus proved.
$\Box$

If $M$ is not full in $\mathbb{H}^m(-1)$, then it is contained in a totally umbilical hypersurface
of the form $L(V, u) =  \mathbb{H}^m(-1) \bigcap (V+u)$ for some linear hyperplane $V \subset \mathbb{R}^{m,1}$
and $u \in \mathbb{R}^{m,1}$. For later applications, it is convenient to use another description
of $V+u$. Note that dimension of $V^\perp$ is $1$. We can choose
a non-zero vector $\xi \in V^\perp$ such that
\begin{equation} \label{eqn:condxi}
 a := \left<u, \, \xi \right> \geq 0, \hspace{40pt}  \langle\xi, \,\, \xi\rangle = \pm 1 \,\,\, {\rm or} \,\,\, 0.
\end{equation}
Then
\begin{align} \label{eqn:V+u}
	 		V+u=\{v\in\mathbb R^{m,1}\mid \left<v,\xi \right>=a\}.
\end{align}


\begin{rmk} \label{rem:LV-geo}
$L(V, u)$ is totally geodesic in $\mathbb{H}^m(-1)$ if and only if $a=0$. To see this, we first observe
 that $L(V, u)$ is totally geodesic in $\mathbb{H}^m(-1)$ if and only if
$V+u$ is a linear Lorentzian subspace of $\mathbb{R}^{m,1}$ (see, for example, \cite{BCO}).
If $V+u$ is linear, then $u \in V$ and $a=0$. On the other hand, if $a=0$ and $L(V, u)$ is non-empty,
then $V+u=V$ is linear,
and $\xi$ must be a spacelike vector by Theorem 4.9.1 in \cite{N.S}.
This implies that $V = (\mathbb{R} \xi)^\perp$ is a linear Lorentzian subspace, and
therefore $L(V, u)$ is totally geodesic in $\mathbb{H}^m(-1)$.
\end{rmk}

To prove Theorem \ref{1.17},  we need the following four lemmas.	
\begin{lem}\label{4.4}
	Assume $M$ is an $n$-dimensional submanifold of $L(V,u)  \subset \mathbb{H}^{m}(-1)$. Let $H_1(x)$ (respectively $H(x)$) be the mean curvature vector of $M$ in  $L(V,u) $ (respectively in $\mathbb{H}^{m}(-1)$) at $x\in M$. Then we have
	\begin{align*}
		H(x)= H_1(x) - n\alpha(\alpha x+\beta \xi),
	\end{align*}
	where
    \begin{equation} \label{4.12}
		\beta :=\frac{1}{\sqrt{\left<\xi, \, \xi \right>+a^2}}, \hspace{10pt}  \,\,\,\,\,\, \alpha :=\beta a,
    \end{equation}
    and $\xi$, $a$ are defined before equation \eqref{eqn:V+u}.
\end{lem}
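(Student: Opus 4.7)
The plan is to use the basic fact (cited just before Lemma \ref{4.1}) that $H_1(x)$ is the orthogonal projection of $H(x)$ onto $T_xL(V,u)$. Consequently $H(x) - H_1(x)$ lies in the one-dimensional normal space of $L(V,u)$ in $\mathbb{H}^m(-1)$ at $x$, so the task reduces to (i) identifying a unit normal $\hat\eta$ to $L(V,u)$ in $\mathbb{H}^m(-1)$ at $x$ and (ii) computing $\langle H(x), \hat\eta\rangle$.

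For step (i), in view of $L(V,u) = \mathbb{H}^{m}(-1) \cap \{v: \langle v, \xi \rangle = a\}$, the tangent space is
\[ T_xL(V,u)=\{v\in \mathbb{R}^{m,1}\mid \langle v, x\rangle = 0,\ \langle v, \xi\rangle = 0\}, \]
so any normal to $T_xL(V,u)$ lying in $T_x\mathbb{H}^m(-1)$ must be contained in $\mathrm{span}\{x, \xi\}$ and be orthogonal to $x$. Using $\langle x, x\rangle = -1$ and $\langle x, \xi\rangle = a$ (which holds because $x\in V+u$), solving the orthogonality condition singles out, up to sign, $\hat\eta = \alpha x + \beta \xi$ with $\alpha, \beta$ as in \eqref{4.12}. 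The formula $\beta = 1/\sqrt{\langle \xi, \xi\rangle + a^2}$ is forced by requiring $\langle \hat\eta, \hat\eta\rangle = \beta^2(a^2 + \langle \xi, \xi\rangle) = 1$; this is well defined in all three allowed cases $\langle \xi, \xi\rangle \in \{1, -1, 0\}$, and the sign is fixed by $a \geq 0$. In particular $\hat\eta$ is spacelike.

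For step (ii), pick an orthonormal basis $\{e_i\}_{i=1}^n$ of $T_xM$ and let $\nabla^L$ denote the standard connection on $\mathbb R^{m,1}$. Since $\alpha, \beta, \xi$ are constants and $x$ is the position vector field, $\nabla^L_X \hat\eta = \alpha X$ for every $X \in T_xM$. Because $\hat\eta$ is a unit vector orthogonal to $T_xM$, to the remaining tangent directions of $L(V,u)$, and to $x$, it is in particular normal to $M$ in $\mathbb R^{m,1}$, so the Lorentzian mean curvature vector $H^L(x)$ of $M$ in $\mathbb R^{m,1}$ satisfies
\[ \langle H^L(x), \hat\eta\rangle \,=\, \sum_{i=1}^n \langle \nabla^L_{e_i} e_i, \hat\eta\rangle \,=\, -\sum_{i=1}^n \langle e_i, \nabla^L_{e_i} \hat\eta\rangle \,=\, -\alpha \sum_{i=1}^n \langle e_i, e_i\rangle \,=\, -n\alpha. \]
By Lemma \ref{4.1} (with $r=1$), $H^L(x) - H(x) = nx$, and $\langle x, \hat\eta\rangle = -\alpha + \beta a = 0$, so $\langle H(x), \hat\eta\rangle = -n\alpha$. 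Combined with $\langle \hat\eta, \hat\eta\rangle = 1$ and the fact that $H_1(x) \in T_xL(V,u)$ is orthogonal to $\hat\eta$, this yields $H(x) - H_1(x) = -n\alpha\,\hat\eta = -n\alpha(\alpha x + \beta \xi)$, as claimed.

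The main subtlety is tracking signs in semi-Riemannian geometry: $x$ is timelike ($\langle x, x\rangle = -1$) while $\hat\eta$ turns out to be spacelike ($\langle \hat\eta, \hat\eta\rangle = +1$) uniformly across $\langle \xi, \xi\rangle \in \{1, -1, 0\}$, and one must verify that the normalization, the identity $\nabla^L_X \hat\eta = \alpha X$, and the Weingarten-type pairing all produce the coefficients in \eqref{4.12} with the correct signs. No new analytic input beyond Lemma \ref{4.1} and the geometry of affine slices of $\mathbb{H}^m(-1)$ is needed.
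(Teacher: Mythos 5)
Your proof is correct and follows essentially the same route as the paper: both reduce the statement to identifying the unit normal $\hat\eta=\alpha x+\beta\xi$ of $L(V,u)$ in $\mathbb{H}^m(-1)$ and computing an $-n\alpha$ pairing from $\nabla^L_{e_i}\hat\eta=\alpha e_i$; your detour through $H^L$ and Lemma \ref{4.1} is equivalent to the paper's direct use of the shape operator of $M$ in $\mathbb{H}^m(-1)$, and your derivation of $\hat\eta$ from scratch replaces the paper's citation of the literature for that fact. The one point you assert but do not justify is that $\left<\xi,\xi\right>+a^2>0$, so that $\beta$ is defined: this is automatic when $\left<\xi,\xi\right>=1$, but when $\left<\xi,\xi\right>=-1$ it requires $a^2>1$ and when $\left<\xi,\xi\right>=0$ it requires $a>0$, both of which follow only from the non-emptiness of $L(V,u)$ via the reversed Cauchy--Schwarz inequality (the paper spells this out, citing Theorems 4.8.10 and 4.9.1 of \cite{N.S}); you should add that half-line.
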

\noindent
{\bf Proof}:
		The proof of this lemma is similar to the proof of Lemma \ref{4.1}. Since the orthogonal projection of
$H(x)$ to the tangent space $T_x L(V,u)$ is $H_1(x)$, we only need to prove that the orthogonal
projection of $H(x)$ to the normal space $\nu_x L(V, u)$ in $\mathbb{H}^{m}(-1)$ is
$- n\alpha(\alpha x+\beta \xi)$.

It is straightforward to check that $\left<\xi, \, \xi\right>+a^2 >0$ if $L(V,u)$ is non-empty.
To prove this, we can use Theorem 4.9.1 in \cite{N.S} if $a=0$ and Theorem 4.8.10 in \cite{N.S}
(i.e. the reversed Cauchy-Schwarz inequality)
if $a \neq 0$ and $\left< \xi, \xi \right> <0$.  Hence $\alpha$ and $\beta$ are well defined.
Moreover it is also known that
$\eta(x):=\alpha x+\beta \xi$ defines a unit normal vector field of $L(V,u)$ in $H^{m}(-1)$
(see, for example, equation (3.1) in \cite{ABD} and Example 3 in Chapter 2.1 of \cite{S.C}).
Let $A$ be the shape operator of $M$ considered as a submanifold in $H^{m}(-1)$,
$\nabla$ and $\nabla^L$ the Levi-Civita connections on $H^{m}(-1)$ and $\mathbb{R}^{m,1}$
respectively. Then $\nabla^L_v \eta = \alpha v$ for any tangent vector $v$ since $\xi$ is constant.
Choose an orthonormal basis $\{e_1,\cdots, e_n\}$ of $T_x M$. We have
		\begin{align*}
		- \left< H(x),\eta(x)\right>
			&= -\sum_{i=1}^n \left< A_{\eta(x)} e_i, e_i \right>
             =  \sum_{i=1}^n \left< \nabla_{e_i} \eta,  e_i \right>
             =  \sum_{i=1}^n \left< \nabla^L_{e_i} \eta,  e_i \right>
			 =  \sum_{i=1}^n \left< \alpha e_i, e_i \right>
			 =  n \alpha.
		\end{align*}
Since $\nu_x L(V, u) = \mathbb{R} \eta(x)$,  the orthogonal projection of $H(x)$ to $\nu_x L(V, u)$ is
\[ \left< H(x),\eta(x)\right> \eta(x) = - n\alpha(\alpha x+\beta \xi).\]
The lemma is thus proved.	
$\Box$
	
\begin{rmk} \label{neq}
	For later applications, we observe that $\pm \frac{\beta}{\alpha+1} \xi \notin \mathbb{H}^m(-1)$ if $L(V, u)$ is not empty.
Otherwise we would have $\left<\xi, \, \xi \right> = -1$ and $\beta=\alpha+1$ by equation \eqref{eqn:condxi}. On the other hand, as noted in the proof of Lemma \ref{4.4}, $a^2 > - \left<\xi, \, \xi \right> = 1$ if $L(V, u)$ is non-empty.  This implies
$\alpha > \beta$, which contradicts to $\beta=\alpha+1$.
\end{rmk}

\begin{lem}\label{4.2.1}
		Minimal isoparametric  submanifolds in $\mathbb{H}^{m}(-1)$ must be totally geodesic.
\end{lem}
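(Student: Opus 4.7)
The plan is to induct on the ambient dimension $m$ of $\mathbb{H}^{m}(-1)$, splitting the inductive step according to whether $M$ is full in $\mathbb{H}^{m}(-1)$. The base case $m=1$ is trivial, since complete isoparametric submanifolds of a line are either points or the whole line, all of which are totally geodesic.

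First I would dispatch the case in which $M$ is full. If $M$ is minimal, the mean curvature vanishes identically and so the hyperbolic MCF $f(\cdot,t)$ is stationary; in particular, it exists for all $t\in\mathbb{R}$. On the other hand, Proposition \ref{1.9} asserts that any full isoparametric $M\neq \mathbb{H}^{m}(-1)$ has MCF with a finite positive extinction time at which it collapses to a focal submanifold. Combining these two statements forces $M=\mathbb{H}^{m}(-1)$, which is trivially totally geodesic in itself.

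Next, assume $M$ is not full. Then $M$ is contained in a totally umbilical hypersurface $L(V,u)\subsetneq \mathbb{H}^{m}(-1)$, and by the cited result of Wu it is itself isoparametric in $L(V,u)$. Applying Lemma \ref{4.4}, the mean curvature $H$ of $M$ in $\mathbb{H}^{m}(-1)$ and the mean curvature $H_1$ of $M$ in $L(V,u)$ are related by $H(x)=H_1(x)-n\alpha\,\eta(x)$, where $\eta(x)=\alpha x+\beta\xi$ is a unit normal to $L(V,u)$ in $\mathbb{H}^{m}(-1)$. Minimality, $H\equiv 0$, then yields $H_1(x)=n\alpha\,\eta(x)$. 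The crucial observation is that $H_1$ is tangent to $L(V,u)$ whereas $\eta$ is normal to it, so $H_1\perp \eta$; pairing the identity with $\eta$ gives $0=\langle H_1,\eta\rangle=n\alpha\,\langle\eta,\eta\rangle=n\alpha$, hence $\alpha=0$ and simultaneously $H_1\equiv 0$. By Remark \ref{rem:LV-geo}, $\alpha=0$ (equivalently $a=0$) means $L(V,u)$ is totally geodesic in $\mathbb{H}^{m}(-1)$ and therefore isometric to $\mathbb{H}^{m-1}(-1)$. Inside this lower-dimensional hyperbolic space, $M$ is again a minimal isoparametric submanifold, so the induction hypothesis applies and $M$ is totally geodesic in $L(V,u)$, hence also in $\mathbb{H}^{m}(-1)$ by transitivity.

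The main obstacle, such as it is, lies in the orthogonality argument of the non-full case: once one notices that $H_1$ and $\eta$ are constrained to lie in complementary orthogonal subspaces of $T_x\mathbb{H}^{m}(-1)$, the identity $H_1=n\alpha\,\eta$ forces both quantities to vanish, simultaneously promoting $L(V,u)$ to a totally geodesic copy of $\mathbb{H}^{m-1}(-1)$ and reducing the statement one dimension down. With this in hand the induction closes cleanly, with Proposition \ref{1.9} handling the full case and Lemma \ref{4.4} handling the non-full case.
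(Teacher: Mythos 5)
Your proof is correct, and the non-full case is essentially identical to the paper's: both apply Lemma \ref{4.4}, use that $H_1(x)$ is tangent and $\alpha x+\beta\xi$ is normal to $L(V,u)$ to force $H_1\equiv 0$ and $\alpha=0$, invoke Remark \ref{rem:LV-geo} to conclude $L(V,u)$ is a totally geodesic lower-dimensional hyperbolic space, and close by induction (you induct on the ambient dimension $m$, the paper on the codimension $k=m-n$; since the reduction drops both by one, this is a cosmetic difference). Where you genuinely diverge is the full case. The paper argues directly: it writes $M=\mathbb{H}^l(-r)\times M'$ via Theorem \ref{thm:Decomp}, computes the mean curvature in $\mathbb{H}^m(-1)$ as $\bigl((\tfrac{l}{r}-n)x,\,H'(y)-ny\bigr)$ using Lemma \ref{4.1}, and observes that $\tfrac{l}{r}-n\le 0$ with equality only when $l=n$, $r=1$, forcing $M'$ to be a point and $M=\mathbb{H}^m(-1)$. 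You instead argue softly: minimality makes the flow stationary, so it cannot have the finite extinction time that Proposition \ref{1.9} guarantees for full $M\neq\mathbb{H}^m(-1)$. Your route is shorter and avoids the curvature computation, but it leans on the heavier machinery behind Proposition \ref{1.9} (ultimately the finite-time collapse theorems of Liu--Terng for spheres), whereas the paper's is elementary and self-contained at this point; both are legitimate since Proposition \ref{1.9} precedes the lemma and does not depend on it, so there is no circularity. One small omission: to conclude $\alpha=0$ from $n\alpha=0$ you need $n>0$, which the paper handles by noting that a zero-dimensional $M$ is trivially totally geodesic; you should add that remark, and likewise note (as the paper does) that minimal here entails the inclusion of $\tilde N$-type scalings is unnecessary --- the rest of your argument stands as written.
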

\noindent
{\bf Proof}:
Let $M$ be an $n$-dimensional minimal isoparametric submanifold in $\mathbb{H}^{m}(-1)$.	
	
If $M$ is full, then by Theorem \ref{thm:Decomp},
there exists $r \geq 1$ and a Lorentzian subspace $V$ of $\mathbb R^{m,1}$ with dimension $l+1$, such that
\begin{align*}
			M=\mathbb{H}^l(-r) \times M',
\end{align*}
where $\mathbb{H}^l(-r)$ is one sheet of the hyperboloid
$\{y\in V \mid \left<y,y\right>=-r\} \subset V$ and
$M'$ is an isoparametric submanifold in the sphere $S^{m-l-1}(r-1)\subset V^{\perp} $.
By Lemma \ref{4.1}, the mean curvature vector of $\mathbb{H}^l(-r)$ in $V$ at $x \in \mathbb{H}^l(-r)$
is $\frac{l}{r} x$.
Let $H'(y)$ be the mean curvature vector of $M'$ in $V^{\perp}$ at $y \in M'$.
The the mean curvature vector of $M$ in $\mathbb{R}^{m,1} = V \times V^\perp$ at $(x, y)$ is given by
$(\frac{l}{r} x, H'(y))$.
By Lemma \ref{4.1}, the mean curvature vector of $M$ in $\mathbb{H}^l(-1)$ is
		\begin{align*}
			H(x,y)&=(\frac{l}{r} x, H'(y))-n(x,y)
			      =((\frac{l}{r}-n)x, H'(y)-ny).
		\end{align*}
Since $l \leq n$ and $r \geq 1$, we have $\frac{l}{r}-n \leq 0$ with equality if and only $l=n$ and $r=1$.
Hence if $H(x,y)=0$, we must have $l=n$ and $r=1$, which in turn implies that
$M'$ is a point and $M = \mathbb{H}^l(-1)$ is a totally geodesic submanifold of $\mathbb{H}^m(-1)$.

We then prove the lemma by induction on the codimension  $k=m-n$ of $M$.
If $k=0$, then $M$ is totally geodesic in $\mathbb{H}^m(-1)$ since they are the same space.

Assume the lemma holds for minimal isoparametric submanifolds with codimension less than $k$.
We want to show any minimal isoparametric submanifold $M$ in $\mathbb{H}^m(-1)$ with codimension $k$
must be totally geodesic.
We have proved that $M$ is indeed totally geodesic if it is full. So we can assume that $M$ is not full.
In this case $M$ must be an isoparametric submanifold in some totally umbilical hypersurface
$L(V, u) = \mathbb{H}^m(-1) \bigcap (V+u)$ with $V+u$ given by equation \eqref{eqn:V+u}.

For $x\in M$, we denote $H_1(x)$ and $ H(x)$ the mean curvature vectors of $M$ in $L(V,u)$ and $\mathbb{H}^m(-1)$
respectively.
Then by Lemma \ref{4.4},
\[ H(x)=H_1(x) - n \alpha (\alpha x + \beta \xi),\]
 where $H_1(x) \in T_x L(V,u)$
and  $\alpha x + \beta \xi$ is a unit normal vector of $L(V,u)$ at $x$. By assumption, $H(x)=0$
since $M$ in minimal in $\mathbb{H}^m(-1)$. Hence we must have
$H_1(x)=0$ and $\alpha =0$ (we can assume $n>0$ here since otherwise $M$ is a point, which is automatically totally
geodesic). Since $\beta \neq 0$, $\alpha =0$ implies $a=0$ by equation \eqref{4.12}. Hence by Remark \ref{rem:LV-geo}, $L(V,u)$ is totally geodesic in $\mathbb{H}^m(-1)$ and is isometric to a lower dimensional hyperbolic space.
Since $H_1(x)=0$ for all $x \in M$, $M$ is a minimal isoparametric submanifold in the hyperbolic space $L(V,u)$ with codimension $k-1$. By the induction hypothesis, $M$ is totally geodesic in $L(V,u)$, and hence is also
totally geodesic in $\mathbb{H}^m(-1)$.
This completes the proof of the lemma.
$\Box$

\begin{lem} \label{lem:varphi}
For any $y=(y_1, \cdots, y_{m+1}) \in \mathbb{R}^{m,1}$, let $\overline{y}=(y_1, \cdots, y_m)$.
Define a map $\varphi: L(V, u) \longrightarrow S^{m-1}$  by
\[ \varphi(y) :=  \frac{\overline{y}+c \, \overline{\xi}}{y_{m+1}+c \, \xi_{m+1}}, \]
where $c := \frac{\beta}{\alpha + 1}$ and $(\overline{\xi}, \xi_{m+1})=\xi$. Then $\varphi$ is a conformal map.
\end{lem}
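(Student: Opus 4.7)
The plan is to verify the conformal property by a direct computation. Writing $\overline{w} := \overline{y} + c\,\overline{\xi}$ and $d := y_{m+1} + c\,\xi_{m+1}$ so that $\varphi(y) = \overline{w}/d$, for a tangent vector $v = (\overline{v}, v_{m+1}) \in T_y L(V,u) \subset \mathbb{R}^{m,1}$ I would compute
\[ \varphi_*(v) \;=\; \frac{\overline{v}}{d} \;-\; \frac{v_{m+1}}{d^{2}}\,\overline{w}, \]
\[ \|\varphi_*(v)\|^{2} \;=\; \frac{\|\overline{v}\|^{2}}{d^{2}} \;-\; \frac{2\,v_{m+1}\,\langle \overline{v},\overline{w}\rangle_E}{d^{3}} \;+\; \frac{v_{m+1}^{2}\,\|\overline{w}\|^{2}}{d^{4}}, \]
where $\langle\cdot,\cdot\rangle_E$ denotes the Euclidean inner product on $\mathbb{R}^{m}$. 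Since $L(V,u)$ is spacelike, the induced Riemannian norm on it satisfies $\|v\|^{2} = \|\overline{v}\|^{2} - v_{m+1}^{2}$, so the strategy is to show that the right-hand side above collapses to $\|v\|^{2}/d^{2}$, yielding a conformal factor $1/d^{2}$.

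The next step is to exploit the two linear constraints encoding $v \in T_y L(V,u)$: tangency to $\mathbb{H}^{m}(-1)$ gives $\langle \overline{y},\overline{v}\rangle_E = y_{m+1}v_{m+1}$, and tangency to $V+u$ gives $\langle \overline{v},\overline{\xi}\rangle_E = v_{m+1}\xi_{m+1}$. A linear combination of these two produces the clean identity
\[ \langle \overline{v},\overline{w}\rangle_E \;=\; v_{m+1}\,d. \]
Similarly, using $\langle y,y\rangle = -1$ and $\langle y,\xi\rangle = a$, expanding $\|\overline{y}\|^{2} - y_{m+1}^{2}$ and $\langle\overline{y},\overline{\xi}\rangle_E - y_{m+1}\xi_{m+1}$ term by term yields
\[ \|\overline{w}\|^{2} - d^{2} \;=\; -1 + 2ca + c^{2}\langle\xi,\xi\rangle. \]

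The heart of the argument, and the only real obstacle, is then the algebraic identity $-1 + 2ca + c^{2}\langle\xi,\xi\rangle = 0$. For this I would use the defining relations $\alpha = \beta a$, $\beta^{-2} = \langle\xi,\xi\rangle + a^{2}$, and $c = \beta/(\alpha+1)$, which together give $\beta^{2}\langle\xi,\xi\rangle = 1 - \alpha^{2}$, hence
\[ 2ca + c^{2}\langle\xi,\xi\rangle \;=\; \frac{2\alpha}{\alpha+1} + \frac{1-\alpha^{2}}{(\alpha+1)^{2}} \;=\; 1. \]
Substituting both $\langle \overline{v},\overline{w}\rangle_E = v_{m+1}d$ and $\|\overline{w}\|^{2} = d^{2}$ back into the expression for $\|\varphi_*(v)\|^{2}$ collapses it to $(\|\overline{v}\|^{2} - v_{m+1}^{2})/d^{2} = \|v\|^{2}/d^{2}$, proving conformality with conformal factor $1/d^{2}$. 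The same identity $\|\overline{w}\|^{2} = d^{2}$ also certifies that $\varphi(y) \in S^{m-1}$. Finally, to guarantee that $\varphi$ is well defined, I note that $d = 0$ would force $\overline{w} = 0$ as well, hence $y = -c\,\xi$, which would contradict Remark \ref{neq}.
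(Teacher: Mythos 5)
Your proposal is correct and follows essentially the same route as the paper: the same formula for $\varphi_*(v)$, the same two tangency identities, the same reduction to the algebraic identity $-1+2ca+c^2\langle\xi,\xi\rangle=0$ (which you verify explicitly where the paper calls it straightforward), and the same appeal to Remark \ref{neq} for well-definedness.
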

\noindent

{\bf Proof}:
We first check $\varphi$ is well defined and $\varphi(y) \in S^{m-1}$ if $y \in L(V, u)$. If fact, $y \in L(V, u)$ implies that
\[ \|\overline{y}\|^2=y_{m+1}^2-1, \hspace{10pt}
\langle\overline{y}, \overline{\xi}\rangle = a + y_{m+1} \xi_{m+1}, \hspace{10pt}
\|\overline{\xi}\|^2= \xi_{m+1}^2 + \langle\xi, \xi\rangle. \]
Hence
\[ \|\overline{y}+c \, \overline{\xi} \|^2 = (y_{m+1}+ c \xi_{m+1})^2 -1 + 2 ca  + c^2 \langle\xi, \xi\rangle.\]
Using definition of $\alpha, \beta, c$, it is straightforward to check $-1 + 2 ca  + c^2 \langle\xi, \xi\rangle=0$.
So we have
\begin{equation} \label{eqn:phinorm}
 \|\overline{y}+c \, \overline{\xi} \|^2 = (y_{m+1}+ c \xi_{m+1})^2.
\end{equation}
This implies $y_{m+1}+ c \xi_{m+1}\neq 0$ since otherwise we would have $y= -c \xi$, which is not possible by Remark \ref{neq}. Hence $\varphi$ is well defined. Moreover, equation \eqref{eqn:phinorm} also implies $\| \varphi(y) \| = 1$
and $\varphi(y) \in S^{m-1}$.

For any $w \in T_y L(V, u) \subset \mathbb{R}^{m,1}$, we have
\[ \varphi_*(w) = \frac{\overline{w}}{y_{m+1}+c\xi_{m+1}} -
     \frac{w_{m+1}(\overline{y}+c\overline{\xi})}{(y_{m+1}+c\xi_{m+1})^2}.\]
Since $\langle w, y\rangle=\langle w, \xi\rangle=0$, we have
$\langle\overline{w}, \overline{y}\rangle = w_{m+1}y_{m+1}$ and $\langle\overline{w}, \overline{\xi}\rangle = w_{m+1} \xi_{m+1}$.
Together with equation \eqref{eqn:phinorm}, we obtain
\[ \|\varphi_*(w)\|^2 =  \frac{\langle w, w\rangle}{(y_{m+1}+c\xi_{m+1})^2}.\]
Hence $\varphi$ is a conformal map.
$\Box$

\begin{lem} \label{F'formula}
Assume $M$  is an $n$-dimensional  submanifold in a totally umbilical submanifold $L(V, u) \subset \mathbb H^{m}(-1)$.
Let $f_1(\cdot,t)$ and $F(\cdot,t)$ be the solutions to  mean curvature flows for $M$ in $L(V,u)$ and  $\mathbb R^{m,1}$ respectively. Let $\xi$, $\alpha$ and $\beta$  be defined by equations \eqref{eqn:condxi} and \eqref{4.12} .
If $\alpha\neq 1$, then for $x\in M$,
	\begin{align}
		F(x,t)=\sqrt{2nt(1-\alpha^2)+1}\, \, (f_1(x,s_{\alpha}(t))-\eta)+\eta,  \label{9.1.5}
	\end{align}
	where $\eta := \frac{\alpha\beta}{1-\alpha^2}\xi$ and  $s_{\alpha}(t)$ is defined by
	\begin{align} \label{eqn:st}
		 	s_{\alpha}(t) :=
		 	\frac{1}{2n(1-\alpha^2)}\ln (2nt(1-\alpha^2)+1).
     \end{align}
If $\alpha=1$, then
\begin{align}
		  	 F(x,t)&=f_1(x,t)-  nt  \beta \xi .  \label{9.1.6}
		  \end{align}
\end{lem}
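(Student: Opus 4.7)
The plan is to verify directly that the proposed right hand side of \eqref{9.1.5} (or \eqref{9.1.6}) solves the Lorentzian MCF equation with the correct initial condition and then invoke uniqueness of solutions. Throughout, I denote by $G(x,t)$ the right hand side of the claimed formula and I will check that $G(x,0) = f_1(x,0)$ (the original immersion of $M$), that $G(\cdot,t)$ is an immersion for all $t$, and that $\partial_t G$ equals the mean curvature vector of the submanifold $G(\cdot,t)$ inside $\mathbb{R}^{m,1}$.

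The first key observation, in the case $\alpha\neq 1$, is that $G(\cdot,t)$ stays inside the affine hyperplane $V+u$. Indeed, using $\langle\xi,\xi\rangle = \beta^{-2}-a^2 = (1-\alpha^2)/\beta^2$ and $a = \alpha/\beta$, one computes $\langle\eta,\xi\rangle = a$, so that
\[
\langle G(x,t),\xi\rangle
= \lambda(t)\bigl(\langle f_1,\xi\rangle-\langle\eta,\xi\rangle\bigr)+\langle\eta,\xi\rangle = a,
\]
where $\lambda(t):=\sqrt{2nt(1-\alpha^2)+1}$. Because $V+u$ is an affine subspace of $\mathbb{R}^{m,1}$, its second fundamental form in $\mathbb{R}^{m,1}$ vanishes, and the mean curvature of any Riemannian submanifold inside $V+u$ can be computed by ordinary differentiation in $\mathbb{R}^{m,1}$.

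Next I will combine Lemmas \ref{4.4} and \ref{4.1} (with $r=1$) to express the mean curvature of $f_1(\cdot,s)$ as a submanifold of $\mathbb{R}^{m,1}$ purely in terms of $H_1$, $f_1$ and $\xi$:
\[
H^L_{f_1} \;=\; H_1(f_1)\,-\,n\alpha(\alpha f_1+\beta\xi)\,+\,n f_1 \;=\;H_1(f_1)+n(1-\alpha^2)f_1 - n\alpha\beta\xi .
\]
Since $G(\cdot,t)$ is obtained from $f_1(\cdot,s_\alpha(t))$ by the dilation $y\mapsto \lambda(y-\eta)+\eta$ inside $V+u$, its tangent space at $G(x,t)$ coincides, as a subspace of $\mathbb{R}^{m,1}$, with the tangent space of $f_1(\cdot,s_\alpha(t))$ at $f_1(x,s_\alpha(t))$; the induced metric scales by $\lambda^2$ and the coordinate Hessian of $G$ in $\mathbb{R}^{m,1}$ equals $\lambda$ times that of $f_1$. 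A standard short computation then gives
\[
H^L_G \;=\; \tfrac{1}{\lambda}\, H^L_{f_1}
\;=\;\tfrac{1}{\lambda}\bigl(H_1(f_1)+n(1-\alpha^2)f_1-n\alpha\beta\xi\bigr).
\]
On the other hand, $\lambda'=n(1-\alpha^2)/\lambda$ and $s_\alpha'(t)=1/\lambda^2$, so the chain rule yields
\[
\partial_t G \;=\; \lambda'(f_1-\eta)+\lambda\, s_\alpha'(t)\,H_1(f_1)
\;=\; \tfrac{1}{\lambda}\bigl(H_1(f_1)+n(1-\alpha^2)(f_1-\eta)\bigr).
\]
The choice $\eta = \tfrac{\alpha\beta}{1-\alpha^2}\xi$, which is exactly what makes $(1-\alpha^2)\eta = \alpha\beta\xi$, forces $\partial_t G = H^L_G$; the initial condition $G(x,0)=f_1(x,0)$ is obvious from $\lambda(0)=1$, $s_\alpha(0)=0$. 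Uniqueness of the MCF then proves \eqref{9.1.5}.

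The main obstacle is that this choice of $\eta$ degenerates when $\alpha=1$, which is why a separate formula is needed there. In that case I will instead set $G(x,t)=f_1(x,t)-nt\beta\xi$, which is a time-dependent translation of $f_1(\cdot,t)$; translations preserve tangent spaces, second fundamental forms and mean curvature vectors in $\mathbb{R}^{m,1}$, so $H^L_G = H^L_{f_1}$. Plugging $\alpha=1$ into the expression for $H^L_{f_1}$ above gives $H^L_G = H_1(f_1)-n\beta\xi$, which matches $\partial_t G = \partial_s f_1 - n\beta\xi = H_1(f_1)-n\beta\xi$. The initial condition $G(x,0)=f_1(x,0)$ is again immediate, and uniqueness of MCF completes the proof of \eqref{9.1.6}.
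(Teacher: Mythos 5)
Your argument is correct. The paper proves \eqref{9.1.5} by a different route: it first observes that $L(V,u)$ is exactly the set of $x\in V+u$ satisfying $\langle x-\eta,x-\eta\rangle=(\alpha^2-1)^{-1}$, so that after the translation $\tau(x)=x-\eta$ the submanifold $\tau(L(V,u))$ becomes a centered sphere in the Euclidean space $\tau(V+u)$ when $\alpha>1$, or a centered hyperboloid in the Lorentz space $\tau(V+u)$ when $0\le\alpha<1$; formula \eqref{9.1.5} is then read off from Lemma \ref{1.2} (respectively its spherical analogue, equation (1.1) in \cite{X.C}), which already packages the dilation--time-reparametrization relation between the flow inside the quadric and the flow in the ambient flat space. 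You instead verify the MCF equation directly: you combine Lemmas \ref{4.4} and \ref{4.1} to get $H^L_{f_1}=H_1+n(1-\alpha^2)f_1-n\alpha\beta\xi$, use the $1/\lambda$ scaling of the mean curvature vector under the dilation centered at $\eta$, and match this against $\partial_t G$ computed by the chain rule. The two routes are computationally equivalent (your calculation is essentially the content of Lemma \ref{1.2} redone in situ), but yours treats $\alpha>1$ and $0\le\alpha<1$ uniformly without identifying $\tau(L(V,u))$ as a quadric, and it makes transparent why $\eta=\frac{\alpha\beta}{1-\alpha^2}\xi$ is the correct center; the paper's version, on the other hand, exhibits the reduction to flows in spheres and hyperboloids explicitly, which is what drives the recursive construction of Remark \ref{rem:RecCons}. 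Your treatment of the $\alpha=1$ case coincides with the paper's. Two trivial loose ends you flag but do not carry out --- that $G(\cdot,t)$ is an immersion and the precise domain $2nt(1-\alpha^2)+1>0$ --- are immediate and also left implicit in the paper.
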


\begin{proof}
If $\alpha \neq 1$, it is straightforward to check that $\eta \in V + u$ and $L(V, u)$ consists of points $x \in V+u$ satisfying equation
$ \langle x-\eta, x-\eta\rangle = (\alpha^2-1)^{-1}. $
Let $\tau$ be the translation on $\mathbb R^{m,1}$ defined by $\tau(x)=x-\eta$ for $x\in \mathbb R^{m,1}$.
Then $\tau(L(V, u))$ is a sphere (or hyperbolic space)  \footnote{$\tau(L(V, u))$ is a circle or hyperbola if its dimension is one.}
centered at origin in the Eulidean (or Lorentz) space
$\tau(V+u)$ if $\alpha > 1$ (or $0 \leq \alpha < 1$).
Since $\tau$ is an isometry, $\tau(f_{1}(\cdot,t))=f_{1}(\cdot,t)-\eta$ is the solution to MCF for $\tau(M)$ in $\tau(L(V,u))$. By Lemma \ref{1.2}  and its spherical analogue (cf. Equation (1.1) in \cite{X.C}), the function
\[ G(x, t) := \sqrt{2nt(1-\alpha^2)+1}\, \, (f_1(x,s_{\alpha}(t))-\eta)\]
 is the solution to MCF for $\tau(M)$ in $\tau(V+u)$. Since $\tau(V+u)$ is a totally geodesic submanifold in $\mathbb R^{m,1}$. $G(x, t)$ also solves the MCF of $\tau(M)$ in $\mathbb R^{m,1}$.
 Hence the solution of MCF for $M$ in $\mathbb R^{m,1}$ is $\tau^{-1}(G(x,t))$, which is exactly the
 function given by equation \eqref{9.1.5}.

If $\alpha=1$, then $\langle \xi, \xi \rangle =0$ and the induced metric on $\tau(V+u)$ is degenerate. In this case, MCF for submanifolds in $\tau(V+u)$ is not well defined. So the above arguments do not work for this case.
Instead, we will check directly that the function $F(x, t)$ in equation \eqref{9.1.6}, which is the limit of the function in
equation \eqref{9.1.5} as $\alpha \rightarrow 1$, satisfy the MCF equation.
Let $H_1(\cdot,t)$ be the mean curvature vector field of $f_1(\cdot,t)$ in $L(V,u)$. Then
\begin{align}
\frac{\partial}{\partial t}F(x,t)
&=H_{1}(x,t)-n\beta\xi.\label{2.9}
\end{align}
On the other hand, since $f_1(\cdot ,t)\subset L(V,u) \subset \mathbb{H}^{m}(-1)$,
by Lemma \ref{4.4}, the mean curvature vector of $f_1(\cdot ,t)$ in $\mathbb{H}^{m}(-1)$ at point $f_1(x,t)$ is
		 \begin{align*}
		 	H_1(x,t) -n(f_1(x,t)+\beta \xi).
		 \end{align*}
By Lemma \ref{4.1},
the mean curvature vector of $f_1(\cdot ,t)$ in $\mathbb R^{m,1}$ at point $f_1(x,t)$ is
$ H_1(x,t)- n \beta \xi$,
which is exactly the right hand side of equation \eqref{2.9}.
Since for each fixed $t$, $F(\cdot, t)$ is just a translation of $f_{1}(\cdot, t)$,
the right hand side of equation~\eqref{2.9} is also the mean curvature vector of
$F(\cdot,t)$  in $\mathbb R^{m,1}$ at the point $F(x,t)$.
Hence $F(\cdot, t)$ is the solution to MCF of $M$ in $\mathbb R^{m,1}$.
This completes the proof of the lemma.
\end{proof}

Now we are ready to prove Theorem \ref{1.17}.

\noindent
{\bf Proof of Theorem \ref{1.17}}:
Let $M$ be an $n$-dimensional isoparametric submanifold of $\mathbb{H}^{m}(-1)$. We prove this
theorem by induction on the codimension $k=m-n$ of $M$.
When $k=0$, $M=\mathbb{H}^{m}(-1)$ and the theorem holds trivially.
For $k \geq 1$, assuming the theorem holds for all isoparametric submanifolds in hyperbolic spaces
with codimension less than $k$, we want to prove the theorem also holds for $M$ with codimension $k$.

If $M$ is full in $\mathbb{H}^{m}(-1)$, the theorem follows from Proposition \ref{1.9}.
So we only need to consider the case where $M$ is not full in $\mathbb{H}^{m}(-1)$.
In this case,  $M$ is contained in a totally umbilical submanifold $L(V, u) \subset\mathbb H^{m}(-1) $.
Let $f_1(\cdot,t)$  be the solution to  MCF for $M$ in $L(V,u)$. Then the MCF $F(\cdot, t)$ of $M$ in $\mathbb R^{m,1}$
is given by Lemma~\ref{F'formula}.
In order to construct the MCF $f(\cdot,t)$  of $M$ in $\mathbb{H}^{m}(-1)$ from $F(\cdot, t)$, we need to make sure the image of
$F(\cdot ,t)$ lies in the time cone $\mathcal{TC}$ defined by equation \eqref{eqn:timecone}.
Using equations \eqref{9.1.5} and \eqref{9.1.6}, it is straightforward to check that
\begin{align*}
\left<F(x,t), F(x,t)\right>
		 	&=-1-2nt, \hspace{10pt} {\rm for \,\,\, all \,\,\,} x \in M.
\end{align*}
Hence for all $t > -\frac{1}{2n}$, the image of $F(\cdot,t)$ lies in $\mathcal{TC}$ and
we can construct $f(\cdot,\cdot)$ from $F(\cdot, t)$ using equation~\eqref{4.18}.
We then complete the proof of this theorem in four steps.

{\bf Step 1}: Prove $f(\cdot, t)$ is an ancient solution.

Observe that $L(V,u)$ is either a Euclidean space, a sphere,
or a hyperbolic space with dimension less than $m$.
By Theorem 1.1  in \cite{X.C}, Theorem 1.1 in \cite{X.CT}, and the induction hypothesis, the maximal existence interval for $f_1(\cdot, t)$ is $(-\infty, T')$ for some $T' \geq 0$. Moreover,
$f_1(\cdot, t)$ collapses to a focal submanifold of $M$ if $T' < \infty$.
Let $T''=T'$ if $\alpha=1$ and
\begin{equation} \label{eqn:T''}
 T'' =\frac{e^{2n(1-\alpha^2)T'}-1}{2n(1-\alpha^2)}  {\rm \,\,\, if \,\,\, } \alpha \neq 1.
\end{equation}
Then $T''\geq 0$ for all $\alpha\geq 0$ and
it is straightforward to check $s_\alpha(t) < T'$ if and only if $t < T''$, where $s_{\alpha}(t)$ is defined by
equation \eqref{eqn:st} if $\alpha \neq 1$ and $s_1(t) := t$. By equation \eqref{9.1.5}, for $F(\cdot, t)$ to be well defined,
we also need $2nt(1-\alpha^2)+1 \geq 0$, which is automatically true if $t < T''$ and $\alpha \geq 1$ and requires
$t > - \frac{1}{2n(1-\alpha^2)}$ if $0 \leq \alpha < 1$.
Since $- \frac{1}{2n(1-\alpha^2)} < - \frac{1}{2n}$ when $0 \leq \alpha < 1$,
we know $F(\cdot, t)$ exists at least over the interval $(- \frac{1}{2n}, T'')$ for all $\alpha \geq 0$.

Let
\begin{equation} \label{eqn:T}
 T = \frac{1}{2n} \ln(1+2nT'') \geq  0.
\end{equation}
By Corollary \ref{cor:fF}, $f(\cdot, t)$ given by equation \eqref{4.18} is well defined for all $t \in (-\infty, T)$
and gives an ancient solution to MCF of $M$ in $\mathbb{H}^m(-1)$.
So it only remains to study the limiting behaviours of $f(\cdot, t)$.

\vspace{6pt}
\textbf{Step 2}: Prove Parts (1)--(4) of Theorem \ref{1.17}. The proof is divided into the following three cases according  to the type of $L(V,u)$.

\textbf{Case 1}: Study limits of $f(\cdot, t)$ when $L(V,u)$ is a hyperbolic space.
In this case, $0\leq \alpha<1$ and $\langle\xi,\xi\rangle=1$.

{\it Part (1) of Theorem \ref{1.17}}: If $T<\infty$, then $T'<\infty$. By the induction hypothesis, $f_1(\cdot, t)$ collapses to a focal submanifold
as $t$ goes to  $T'$. This implies that $F(\cdot, t)$
collapses to a focal submanifold of $M$ as $t \rightarrow T''$. Consequently
$f(\cdot, t)$ collapses to
a focal submanifold of $M$ as $t \rightarrow T$.

{\it Part (2) of Theorem \ref{1.17}}: If $T=\infty$, then $T'=T''=\infty$. If in addition $M$ is not flat,  
then by the induction hypothesis, $f_1(\cdot,t)$ converges to a totally geodesic
submanifold $N \subset L(V,u) $  as $t \rightarrow \infty$. In particular, this means for all $x \in M$,
\begin{align}
     	\lim_{t\rightarrow \infty} f_1(x,t) = h(x),     \label{4.20}
\end{align}
where $h: M\rightarrow L(V,u)$ is the embedding of $N$ in $L(V,u)$.
		
By  equations (\ref{4.18}) and (\ref{9.1.5}), we have
		  \begin{align}
		  	f(x,t) &= v_\alpha(t) f_1(x,s_{\alpha}(w(t)))
		  	      -(v_\alpha(t)-e^{-nt}) \, \eta,   \label{4.21}
		  \end{align}
where $\eta$ is the constant vector defined in Lemma \ref{F'formula} and
\begin{equation} \label{eqn:wv}
w(t) :=\frac{1}{2n}(e^{2nt}-1), \hspace{10pt}
v_\alpha(t) := \sqrt{(1-e^{-2nt})(1-\alpha^2)+e^{-2nt}}.
\end{equation}		
As $t \to \infty$, we have
\begin{equation} \label{eqn:limv}
 v_\alpha(t) \to \sqrt{1-\alpha^2}, \,\,\,
 w(t) \to \infty, \,\,\,
 s_{\alpha}(w(t))	\to \infty, \,\,\,
 f_1(x,s_{\alpha}(w(t))) \to h(x).
\end{equation}
Consequently, for all $x \in M$, we have
\begin{equation} \label{eqn:limf}
 \lim_{t \to \infty} f(x,t) = \tilde{h}(x),
\end{equation}
where $\tilde{h}: M \rightarrow \mathbb R^{m,1}$ is the map defined by
\begin{align} \label{eqn:htilde}
		  	\tilde{h}(x)=\sqrt{1-\alpha^2} \, (h(x)-\eta).
\end{align}
By construction, $f(x,t) \in \mathbb{H}^m(-1)$ for all $x \in M$ and $t \in \mathbb{R}$.
Hence $\tilde{h}(x) \in  \mathbb{H}^m(-1)$ for all $x \in M$.
Let $\tilde{N} = \tilde{h}(M) \subset \mathbb{H}^m(-1)$. Then $f(\cdot, t)$ pointwise converges to $\tilde{N}$.

We now show that $\tilde{N}$ is a totally geodesic submanifold  in $\mathbb{H}^m(-1)$.
In fact, since $N$ is totally geodesic in $L(V, u)$, the mean curvature vector of $N$ in $L(V,u)$ is $0$.
Applying Lemma~\ref{4.1} and Lemma \ref{4.4} to the inclusion
$N \subset L(V,u)\subset \mathbb{H}^m(-1)$,
we can show that for all $x \in M$, the mean curvature vector of $N$ in $\mathbb{R}^{m,1}$ at point $h(x)$ is equal to
\begin{align*}
	-n\alpha(\alpha h(x)+\beta \xi)+nh(x) \,\, = \,\, \sqrt{1-\alpha^2} \, n \tilde{h}(x).
\end{align*}
Observe that $\tilde{N}$ is just a translation of a  multiplication of $N$ by a scalar $\sqrt{1-\alpha^2}$.
So the mean curvature vector of $\tilde N$ in $\mathbb{R}^{m,1}$ at point $\tilde h(x)$ is equal to $n \tilde{h}(x)$.
By Lemma \ref{4.1}, the mean curvature vector of $\tilde N$ in $\mathbb{H}^m(-1)$ at point $\tilde h(x)$
is equal to $0$. Hence $\tilde{N}$ is minimal in $\mathbb{H}^m(-1)$.

Moreover, since $N$ is totally geodesic in $L(V,u)$, by Proposition 1.3 in \cite{B.W} and Lemma \ref{1.15}, $N$ is isoparametric in  both $\mathbb{H}^m(-1)$ and $\mathbb R^{m,1}$.
Since $\tilde{N}$ is just a translation of a scalar multiplication of $N$,
$\tilde{N}$ is  also isoparametric in  both $\mathbb{H}^m(-1)$ and $\mathbb R^{m,1}$.
Since $\tilde{N}$ is also minimal in $\mathbb{H}^m(-1)$, by Lemma \ref{4.2.1},
$\tilde{N}$ is totally geodesic in $\mathbb{H}^m(-1)$.

{\it Part (3) of Theorem \ref{1.17}}: If $T=\infty$  and $M$ is flat with dimension greater than one, then $T'=\infty$ and by induction hypothesis, $f_1(\cdot, t)$
converges to a point on the ideal boundary of $L(V,u)$ as $t \rightarrow \infty$.
Since $V$ is a Lorentzian subspace of $\mathbb{R}^{m,1}$,
we can choose an orthonormal basis $\epsilon=\{\epsilon_1, \cdots, \epsilon_{m+1}\}$ of $\mathbb{R}^{m,1}$
satisfying condition \eqref{eqn:condbasis} such that $\{ \epsilon_2, \cdots, \epsilon_{m+1} \}$ is an orthonormal
basis of $V$. 
By Lemma \ref{lem:convBD}, as $t \rightarrow \infty$, $ \langle f_1(x,t), \, \epsilon_{m+1}\rangle \rightarrow -\infty$ and
$\frac{f_1(x, t)}{-\langle f_1(x,t), \, \epsilon_{m+1}\rangle}$ converges to a point in $V+u$
which is independent of $x$.
Since as $t \rightarrow \infty$,
$v_\alpha(t) \rightarrow \sqrt{1-\alpha^2}$ and
$s_{\alpha}(w(t)) \rightarrow \infty$.
So by equation \eqref{4.21},
$ \langle f(x,t), \epsilon_{m+1}\rangle \rightarrow -\infty$ and
\[ \lim_{t \rightarrow \infty} \,\, \frac{f(x, t)}{-\langle f(x,t), \epsilon_{m+1}\rangle}
  \,\, = \,\, \lim_{t \rightarrow \infty}  \,\, \frac{f_1(x, t)}{-\langle f_1(x,t), \epsilon_{m+1}\rangle} \]
  converges to a point in $\mathbb{R}^{m,1}$ independent of $x$. 
Hence, by Lemma \ref{lem:convBD}, $f(\cdot, t)$ converges to a point on the ideal boundary of $\mathbb{H}^m(-1)$.

{\it Part (4) of Theorem \ref{1.17}}:
Assume $M$ is not totally geodesic in $\mathbb{H}^m(-1)$ and consider the limit of $f(\cdot, t)$ as $t \rightarrow -\infty$.

If $\alpha=0$, then $v_{\alpha}(t)=1$,  $s_{\alpha}(w(t))=t$, and $f(x, t) = f_1(x, t)$. By Remark \ref{rem:LV-geo},
$L(V, u)$ is totally geodesic in $\mathbb{H}^m(-1)$ and $V+u=V$ is a Lorentzian subspace of $\mathbb{R}^{m,1}$.
We can choose
an orthonormal basis $\epsilon=\{\epsilon_1, \cdots, \epsilon_{m+1}\}$ of $\mathbb{R}^{m,1}$ satisfying
condition \eqref{eqn:condbasis} such that
$\epsilon_1 = \xi$ and $\{\epsilon_2, \cdots, \epsilon_{m+1}\}$ is an orthonormal basis of $V$.
Let $\Psi_{\epsilon,1}: \overline{\mathbb{H}^m(-1)} \longrightarrow \overline{\mathbb{B}^m}$ be the map defined by
equation \eqref{eqn:PsiEr}. Then $\Psi_{\epsilon,1}$ maps $L(V,u)$ to the set
$\{(0, a_2, \cdots, a_m) \mid \sum_{i=2}^m a_{i}^2 < 1\}$. In particular, it maps
the ideal boundary of $L(V,u)$ to a totally geodesic subsphere in $S^{m-1}$.
By induction hypothesis, $f_1(\cdot, t)$ converges to a smooth submanifold with flat normal bundle
in the ideal boundary of $L(V,u)$ as $t \rightarrow -\infty$.
Hence $f(\cdot, t)$ also converges to a smooth submanifold with flat normal bundle
in the ideal boundary of $\mathbb{H}^m(-1)$. Moreover, the dimension of these limit submanifolds are the same
as the dimension of $M$.

If $0<\alpha<1 $, we define
\begin{align*}
	T_\alpha:= \lim_{t \rightarrow -\infty} s_\alpha(w(t)) =
		\frac{1}{2n(1-\alpha^2)}\ln\alpha^2.
\end{align*}
Then $T_\alpha <0$.
Let
\[ z(x, t) := e^{nt} v_\alpha(t) f_1(x, s_{\alpha}(w(t)))
		  	      +(1-e^{nt} v_\alpha(t)) \,\, \eta.\]
Then $f(x, t) = e^{-nt} z(x, t)$.

Note that, as $t \rightarrow -\infty$,  $e^{nt} v_\alpha(t) \rightarrow \alpha$  and
\[ z(x, t) \rightarrow \alpha \left( f_1(x,T_\alpha)
		  	      + c \,\, \xi \right), \]
where $c:= \frac{\beta}{\alpha+1}$.
Since $f(x, t) \in \mathbb{H}^{m}(-1)$, the last coordinate of $z(x, t)$, denoted by $z_{m+1}(x, t)$, must be positive. Together with Remark \ref{neq}, we have
\[ \lim_{t \rightarrow -\infty} z_{m+1}(x, t) =
      \alpha \left( f_1(x,T_\alpha)_{m+1} +c \, \xi_{m+1} \right) \in (0, \infty), \]
 where
$f_1(x,T_\alpha)_{m+1}$ is the last coordinate of $f_1(x,T_\alpha)$. Hence
the last coordinate of $f(x, t)$ goes to $\infty$ as  $t \rightarrow -\infty$.
Let $\psi$ be the map from $\mathbb{H}^{m}(-1)$ to $\mathbb{B}^m$ defined by equation \eqref{9.2}.
By Lemma \ref{lem:convBD},
\[ \lim_{t \rightarrow -\infty} \psi(f(x,t))
 =  \lim_{t \rightarrow -\infty} \frac{\overline{z(x,t)}}{z_{m+1}(x, t)}
    =  \frac{\overline{f_1(x,T_\alpha)} + c \overline{\xi}}{f_1(x, T_\alpha)_{m+1} + c \xi_{m+1}} \in S^{m-1}, \]
where $\overline{z(x,t)}$ and $\overline{f_1(x,T_\alpha)}$ are vectors whose coordinates are
the first $m$ coordinates of $z(x, t)$ and $f_1(x, T_\alpha)$ respectively.
Hence $\psi(f(\cdot, t))$ converges to $\widetilde{M} := \{\varphi(f_1(x, T_\alpha)) \mid x \in M \}$
where $\varphi: L(V, u) \longrightarrow S^{m-1}$ is a conformal diffeomorphism from
$L(V, u)$ to an open submanifold of $S^{m-1}$ defined in Lemma \ref{lem:varphi}.
Note that $f_1(\cdot, T_\alpha)$ is an isoparametric submanifold in $L(V,u)$.
By Lemma \ref{flat}, $\widetilde{M}$ is a smooth submanifold with flat normal bundle
in the ideal boundary of $\mathbb{H}^m(-1)$. Moreover dimension of $\widetilde{M}$
is equal to dimension of $f_1(\cdot, T_\alpha)$, which is also equal to dimension of $M$.

\vspace{6pt}

\textbf{Case 2}: Study limits of $f(\cdot, t)$ when $L(V,u)$ is a sphere.
In this case,   $ \alpha>1$ and $\langle\xi,\xi\rangle=-1$.

We first consider part (1) of Theorem \ref{1.17}. If $T < \infty$, then $T'$ could be either finite or infinite.
If $T' < \infty$, then the proof is the same as the corresponding part in Case 1.  If $T'=\infty$, by Theorem 1.1 in \cite{X.CT}, $M$ must be minimal in $L(V,u)$,
which also implies that $f_1(\cdot,t)$ is independent of $t$.
Note that  $T''= \frac{1}{2n(\alpha^2-1)}$ if $T'=\infty$.
Since $\lim_{t \rightarrow T''} \,\, 2nt(1-\alpha^2)+1 =0$, $F(\cdot, t)$ converges to a point when $t\rightarrow T''$.
Consequently  $f(\cdot, t)$ also converges to a point (which is also a focal submanifold of $M$) when $t\rightarrow T$.

Parts (2) and (3) of Theorem \ref{1.17} can not occur since $T$ is always finite in this case.
The proof of part (4) of Theorem \ref{1.17} is the same as the corresponding part for the case of $0<\alpha<1$ in Case 1.

\vspace{6pt}

\textbf{Case 3}: Study limits of $f(\cdot, t)$ when $L(V,u)$ is isometric to a Euclidean space.
 In this case, $\alpha=1$ and $\left<\xi, \xi \right>=0$.

 The proof of part (1) of Theorem \ref{1.17}  is the same as the corresponding part in Case 1.
 The proof of part (4)  of Theorem \ref{1.17} can be obtained from corresponding part for the case of $0 < \alpha < 1$ in Case 1 by setting $T_1 = - \frac{1}{2n}$ and taking limits for all other formulas as $\alpha \rightarrow 1$.

If $T = \infty$, then $T''=T'=\infty$. By Theorem 1.1 in \cite{X.CT},  $T'=\infty$ if and only if $M$ is totally geodesic in $L(V,u)$. In particular, $M$ is flat.  Hence part (2)  of Theorem \ref{1.17} can not  occur and we only need consider part (3) of  Theorem \ref{1.17} in this case. 
Since $M$ is totally geodesic in $L(V,u)$, $f_1(x,t)=x$ for all $x \in M$ and $t$. By equations \eqref{4.18} and \eqref{9.1.6}, we have
\[ f(x, t) = e^{-nt} x + \frac{\beta}{2} (e^{-nt}-e^{nt}) \xi.\]
For $x=(x_1, \cdots x_{m+1}) \in \mathbb{R}^{m,1}$, we write
$x=(\overline{x}, x_{m+1})$ where $\overline{x}=(x_1, \cdots, x_m)$.
Let $\psi$ be the map from $\mathbb{H}^{m}(-1)$ to $\mathbb{B}^m$ defined by equation \eqref{9.2}.
Then
\[ \psi(f(x, t)) = \frac{ e^{-nt} \overline{x} + \frac{\beta}{2} (e^{-nt}-e^{nt}) \overline{\xi}}
                          {1+ e^{-nt} x_{m+1} + \frac{\beta}{2} (e^{-nt}-e^{nt}) \xi_{m+1}}. \]
Hence for all $x \in M$,
\[ \lim_{t \rightarrow \infty} \psi(f(x, t)) = \frac{\overline{\xi}}{\xi_{m+1}}.\]
Since $\langle\xi, \xi\rangle= \|\overline{\xi}\|^2 - \xi_{m+1}^2 =0$, $\, \, \frac{\overline{\xi}}{\xi_{m+1}} \in S^{m-1}$.
This shows that $f(\cdot, t)$ converges to a point in the ideal boundary of  $\mathbb{H}^{m}(-1)$.

This completes the proof of Theorem \ref{1.17}.
$\Box$

\begin{rmk} \label{dimension,one}

If dimension of $M$ is one, then sectional curvature of $M$ is always 0 (i.e. $M$ is always flat). In this case, the statement of part (3) of Theorem \ref{1.17} is no longer accurate and it should be modified properly. In fact, by Theorem 1.11 in \cite{B.W}, a one-dimensional isoparametric submanifold $M$ in $\mathbb{H}^m(-1)$ must have one of the following 4 types: (i) a circle, (ii) a hyperbola inside the light cone of a translation of a 2-dimensional Lorentz subspace of $\mathbb{R}^{m, 1}$, (iii) a horocycle in a 2-dimensional hyperbolic space contained in $\mathbb{H}^m(-1)$, (iv) a geodesic in a flat totally umbilical hypersurface of dimension bigger than one in a hyperbolic space contained in $\mathbb{H}^m(-1)$. Assume $T=\infty$. Then $M$ can not be a circle.
If $M$ is a hyperbola as in type (ii), then MCF $f(\cdot, t)$ converges to a geodesic in $\mathbb{H}^m(-1)$  as $t \rightarrow \infty$, and the proof is the same as the proof of part (2) in Step 2 Case 1.
If $M$ is a horocycle as in type (iii), then  $f(\cdot, t)$ converges to a point in the ideal boundary of $\mathbb{H}^m(-1)$,
and the proof is the same as the proof of part (3) in Step 2 Case 1.
If $M$ has type (iv),  then $f(\cdot, t)$ also converges to a point in the ideal boundary of $\mathbb{H}^m(-1)$,
and the proof is the same as the proof of part (3) in Step 2 Case 3.
\end{rmk}
	
\begin{rmk} \label{rem:RecCons}
Since MCF flow of codimension $0$ submanifold is trivial, we can use equations \eqref{9.1.5} and \eqref{9.1.6}, Corollary \ref{cor:fF},
and constructions of solutions in spheres and Euclidean spaces in \cite{X.CT}
to obtain explicit solutions for MCF of all isoparametric submanifolds in hyperbolic spaces in a recursive manner.
\end{rmk}

\vspace{30pt} \noindent
Xiaobo Liu \\
School of Mathematical Sciences \& \\
Beijing International Center for Mathematical Research, \\
Peking University, Beijing, China. \\
Email: {\it xbliu@math.pku.edu.cn}
\ \\ \ \\
Wanxu Yang \\
School of Mathematical Sciences, \\
Peking University, Beijing, China. \\
Email: {\it yangwanxu@stu.pku.edu.cn}
	
\end{document}